\documentclass{amsart}
\setlength\topmargin{0mm}
\setlength\textheight{230mm}
\setlength\oddsidemargin{5mm}
\setlength\evensidemargin{5mm}
\setlength\textwidth{150mm}
\usepackage[dvipdfmx]{graphicx}
\usepackage[dvipdfmx]{color}
\usepackage{xcolor}
\usepackage{verbatim}
\usepackage{amsmath}
\usepackage{amssymb, mathrsfs}
\usepackage{amsbsy}
\usepackage{amscd}
\usepackage{amsthm}
\usepackage{bm}

\newcommand{\N}{\mathbb{N}}
\newcommand{\R}{\mathbb{R}}
\newcommand{\Z}{\mathbb{Z}}

\newcommand{\set}[1]{\left\{#1\right\}}

\renewcommand{\d}{{\rm{d}}}

\theoremstyle{break}

\newtheorem{cor}{Corollary}
\newtheorem{thm}{Theorem}
\newtheorem{lem}{Lemma}
\newtheorem{Def}{Definition}
\theoremstyle{definition}
\newtheorem{remark}{Remark}
\usepackage{color}
\definecolor{lilas}{RGB}{182, 102, 210}

\newcommand{\nn}{\nonumber}
\numberwithin{equation}{section}
\begin{document}
\title{Concentration results for directed polymer with unbounded jumps}
\date{\today}
\author{Shuta Nakajima} 
\address[Shuta Nakajima]
{Research Institute in Mathematical Sciences, 
Kyoto University, Kyoto, Japan}
\email{njima@kurims.kyoto-u.ac.jp}

\keywords{directed polymer, random environment, first passage percolation, ground states, zero temperature.}
\subjclass[2010]{Primary 60K37; secondary 60K35; 82A51; 82D30}

\begin{abstract}
We study the free energy and its relevant quantity for the directed polymer in random environment. The polymer is allowed to make unbounded jumps and the environment is given by the Bernoulli variables. We first establish the concentration of the ground state energy of polymer at zero temperature. Secondly, we also prove the same property of the free energy at finite temperature. In the proof, we use the fact that the maximum jump of any polymer nearly minimizing energy is not too large with high probability. This is an interesting property itself from the first passage percolation viewpoint.
\end{abstract}

\maketitle

\section{Introduction}
We discuss models of directed polymer which have unbounded jumps introduced in \cite{CFNY}. See~\cite{CFNY} for the background and related works. The previous paper shows the following three results: (i) The continuity of the free energy with respect to inverse temperature and the appearance probability of obstacles; (ii) The asymptotic of the free energy as the appearance probability of obstacles goes to 1; (iii) The continuity of the time constant of First Passage Percolation (FPP) related to the model. In this paper, we show concentration bounds for the ground state energy, which is nothing but the passage time, and the free energy. As applications, we derive the so-called ``rate of convergence'' results and, based on them, we are able to remove the restriction left in \cite{CFNY} on a parameter in (i) and also give an alternative proof of (iii).

In fact, a concentration bound for the lower tail for FPP is shown in \cite{CFNY}, Proposition 3.1. However the upper tail is significantly more difficult as we need to control the maximum jump of the optimal path, see Section~\ref{KEY} below. In this paper, we succeed in controlling jumps not only of the optimal path but also of low energy paths in the directed polymer model. As a result, we can show the concentration for the passage time and the free energy. 

\subsection{Setting of models}
\hspace{2mm}\\

Let $(\{X_n\}_{n\in\N},P)$ be the random walk on $\Z^d$ starting from $0$ and with the transition probability
\begin{equation} \nn
 P(X_{n+1}=z|X_n={y})=f(|{y-z}|_1),
\end{equation}
where $|x|_1:=\sum_{i=1}^d|x_i|$ for $x\in\Z^d$ and $f:\N\cup\{0\}\to \R$ is a function of the form
\begin{equation}
 f(k)=c_1\exp \{-c_2k^\alpha\},   \label{f(k)}
\end{equation}
where $\alpha,c_2>0$ and $c_1$ is a positive constant determined as to be $\sum_{y\in\Z^d}f(|{y}|_1)=1$ (for the choice of $f(k)$, see Remark \ref{choice} below).
The random environment is modelled by independent and identically
distributed Bernoulli random variables
$(\{\eta(j,x)\}_{(j,x)\in\N\times\Z^d}, Q)$
with parameter $p$;
\begin{equation} \nn
 Q(\eta(0,0)=1)=p\in(0,1).
\end{equation} 
We introduce the Hamiltonian 
\begin{equation} \nn
 H_n^\eta(X)=\sum_{j=1}^n\eta(j,X_j),
\end{equation}
and define the partition functions by 
\begin{equation} \nn
 Z_n^{\eta,\beta}=P[\exp\{\beta H_n^\eta\}] \textrm{ for $\beta\in\R$ and } 
 Z_n^{\eta,-\infty}=P(H_n^\eta=0),
\end{equation}
where $P[\cdot]$ denotes the expectation with respect to $P$.
Note that $Z_n^{\eta,-\infty}$ is positive for $Q$-almost every $\eta$,
since the random walk has unbounded jumps.

 An important quantity in this model is the so-called free energy defined by
\begin{eqnarray} \label{free energy}
 \begin{split}
\varphi(p,\beta)
&
=\lim_{n\to\infty}\frac{1}{n}\log Z_n^{\eta,\beta}\\
&
=\lim_{n\to\infty}\frac{1}{n}Q[\log Z_n^{\eta,\beta}],
 \end{split}
\end{eqnarray}
 whose existence can be shown by using subadditive ergodic theorem. In~\cite{CFNY}, the continuity property and some asymptotic beahvior of the free energy were studied.

Next, we introduce First Passage Percolation models related to this directed polymer. Denote the (scaled) points where $\eta=0$ by
\begin{equation} \nn
 \omega_p=\sum_{(k,x)\in \N\times\Z^d}(1-\eta(k,x))
 \delta_{(k,s_p x)}, 
\end{equation}
with the scaling factor $s_p=(\log\frac1p)^{1/d}\sim (1-p)^{1/d}$ ($p\uparrow 1$). 
This scaling is natural since $\omega_p$ converges as $p\uparrow 1$ to the Poisson point process $\omega_1$ on $\N\times\R^d$ whose intensity is the product of the counting measure and Lebesgue measure.
With some abuse of notation we will frequently identify $\omega_p$, and more generally any  point measure, with its support. Given a realization of $\omega_p$, we define the minimum passage time from $0$ to $n$ by 
\begin{equation} \label{eq:passtime}
T_n(\omega_p)
= \min\set{\sum_{k=1}^n |x_{k-1}-x_k|^\alpha:
 x_0=0\textrm{ and }\{(k,x_k)\}_{k=1}^n\subset \omega_p}. 
\end{equation} 
This is the directed version of the Howard-Newman's Euclidean FPP model in \cite{HN01}.
Now, a direct application of the subadditive ergodic theorem shows that the limit 
\begin{equation} 
\mu_p=\lim_{n\to\infty}{1 \over n}T_n(\omega_p)
\label{mu_p}
\end{equation} 
exists $Q$-almost surely and equals to $\lim{1 \over n}Q[T_n(\omega_p)]$. The limit $\mu_p$, so-called time constant, is non random. Observe~also that definition (\ref{eq:passtime}) makes perfect sense when $p=1$, yielding a limit $\mu_1$ in (\ref{mu_p}). It is again shown in~\cite{CFNY} that $\mu_p$ is continuous as $p\nearrow{}1$. 

 This FPP is related to the \emph{ground state at $\beta=-\infty$} of the directed polymer introduced above. Namely,
\[\sup\{P(X_i=x_i \text{ for } 1\leq{}i\leq{}n):x_i\in{}\Z^d,\text{ }\eta(i,x_i)=0\}=c^n_1{}\exp\{-c_2s_p^{-\alpha}T_n(\omega_p)\}.\] 
Furthermore, it is shown in~\cite{CFNY} that as
$p \uparrow 1$, the ground state gives dominant contribution to the free energy:
\begin{eqnarray} 
 \varphi (p,-\infty)\sim -c_2\mu_1(1-p)^{-\alpha/d}.
\label{eq:HD}
\end{eqnarray}
\begin{remark}
As one sees above, the subadditive ergodic theorm is useful to show the existence of the free energy and the time constant. However, it prevent us from getting more information about them, such as the continuity. In this paper, we will use a method of concentration of measures to get the rate of convergence in~\eqref{free energy} and~\eqref{mu_p} and derive the continuity results as a corollary. 
\end{remark}
\subsection{Main results}
\hspace{2mm}\\

First, we shall present the results for FPP. The first result is the concentration of the passage time around the mean.
\begin{thm}\label{conc;FPP}
For any $\delta>0$ there exist positive constants $C_1$,$C_2>0$, and $\lambda\in(0,1]$ which are independent of $p$ and $n$ such that for any $n\in\N$,
\begin{equation}
Q(|T_n(\omega_p)-Q[T_n(\omega_p)]|>n^{\frac{1}{2}+\delta})<{}C_1\exp\{-C_2n^{\lambda}\}.\label{con1;FPP}
\end{equation}
\end{thm}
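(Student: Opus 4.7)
The plan is to combine a Kesten--Talagrand-type martingale concentration with the control on large jumps promised in Section~\ref{KEY}. The new difficulty compared with standard FPP is that flipping one coordinate of $\eta$ can force the minimiser to take a long jump, so a bounded-differences argument is impossible without first truncating jump sizes. I therefore fix a small $\gamma>0$, set $L=L_n=\lceil n^{\gamma}\rceil$, and introduce the truncated passage time $T_n^L$ by restricting the minimum in~\eqref{eq:passtime} to paths with $\max_{k}|x_{k-1}-x_k|_1\leq L$. The input from Section~\ref{KEY} ensures that the optimiser for $T_n$ has no jump larger than $L$ with probability at least $1-Ce^{-cn^{\lambda'}}$ for some $\lambda'>0$, so it suffices to establish concentration for $T_n^L$ and then transfer the mean, which is straightforward because on the truncation event $T_n=T_n^L\leq nL^{\alpha}$ and the complementary event is super-polynomially small.

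For $T_n^L$ I run the Doob martingale obtained by exposing the coordinates $\eta(k,x)$ one at a time in the finite box $\{1,\dots,n\}\times\{|x|_1\leq nL\}$, which contains the support of every admissible path. A plain Azuma bound is too weak because this box has polynomial volume while the per-coordinate oscillation of $T_n^L$ is $O(L^{\alpha})$; instead I invoke a Talagrand-type ``bounded differences with certificates'' inequality (as in Kesten's method for lattice FPP), which replaces the total number of coordinates by the length of the witnessing set of the minimiser, namely~$n$. The oscillation of $T_n^L$ under a single flip of $\eta(k,x_k^{\ast})$ from $0$ to $1$ is bounded by a rerouting argument: moving the optimiser through a nearby zero of $\eta$, which exists within $\ell^{1}$-distance $O(1)$ on a further event of overwhelming probability (since $p<1$), costs at most $CL^{\alpha}$. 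This yields
\begin{equation*}
Q\bigl(|T_n^L-Q[T_n^L]|>t\bigr)\leq C\exp\bigl(-c\,t^{2}/(nL^{2\alpha})\bigr),
\end{equation*}
and taking $t=n^{1/2+\delta}$ with $\gamma<\delta/\alpha$ gives the stretched-exponential rate with $\lambda=\min(\lambda',\,2\delta-2\gamma\alpha)\wedge 1$.

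The main obstacle is the concentration step itself: proving the inequality with the small factor~$n$ (the length of the witnessing set) rather than the full box volume that a naive Azuma would produce. In this model this requires certifying the witnessing set uniformly over environments in the presence of unbounded jumps and simultaneously controlling the rerouting cost uniformly in~$p$; adapting the Talagrand--Kesten machinery to this context is where the delicate work lies.
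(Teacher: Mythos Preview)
Your strategy is sound in outline—control jump sizes via the key lemma, then run a concentration argument with per-step oscillation $n^{o(1)}$—but you have made life harder than necessary by choosing to expose the environment \emph{coordinate by coordinate}. This forces you into the Kesten--Talagrand witnessing-set machinery to reduce the effective number of coordinates from the polynomial box volume down to $n$, and you yourself flag this adaptation as the delicate step that remains to be done.

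The paper sidesteps this entirely by exposing the environment \emph{one time-slice at a time}: the filtration is $\mathcal{G}_m=\sigma(\omega|_{[0,m]\times\R^d})$, so there are exactly $n$ martingale increments from the start. The key lemma (applied to the modified configuration $\bar\omega$, which has the $\theta$-property deterministically) shows that the optimal path for $\bar\omega$ has no jump larger than $n^{\zeta}$; hence replacing the entire $m$-th section by another configuration changes $T_n(\bar\omega)$ by at most $O(n^{\zeta\alpha})$, uniformly and deterministically. Feeding $\sum_{m=1}^n (\sup_{\omega'}|T_n(\bar\omega^{(m)})-T_n(\bar\omega)|)^2 \le n^{1+o(1)}$ into a standard bounded-differences inequality (the entropy method, or for $\alpha\le 1$ just Azuma/Liu--Watbled) finishes the proof. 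Your ``certificate of size $n$'' is built into the filtration itself.

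Two smaller points. First, the paper's device of working with $\bar\omega$ (adding a point in every empty $n^\theta$-box) is cleaner than your combination of truncating to $T_n^L$ and conditioning on a high-probability ``nearby zero'' event: it makes the rerouting cost deterministic and automatically uniform in $p$, whereas your claim that a zero of $\eta$ lies within $\ell^1$-distance $O(1)$ is not uniform as $p\uparrow 1$ unless you are careful about the $s_p$ scaling. Second, the paper in fact treats $\alpha\le 1$ separately and more simply, since the subadditivity $|a+b|^\alpha\le |a|^\alpha+|b|^\alpha$ gives the increment bound without any appeal to the key lemma.
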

The next result is about the rate of convergence.
It implies that the fluctuation exponent (see \cite{KS91}) is ${1 \over 2}$ or less.
\begin{thm}\label{21}
For any $\chi>1/2$ there exist positive constants $C_0>0$ which are independent of $p$ such that for any $n\in\N$,
\begin{eqnarray}
|n\mu_p-Q[T_n(\omega_p)]|&<&C_0n^{\chi},\label{con3;FPP}
\end{eqnarray}
\end{thm}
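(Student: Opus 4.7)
The lower bound $Q[T_n(\omega_p)] \geq n\mu_p$ is immediate from the subadditivity of $a_m := Q[T_m(\omega_p)]$ underlying the definition of $\mu_p$ in \eqref{mu_p}. So I need only show $a_n - n\mu_p \leq C n^{1/2+\delta}$ for a fixed small $\delta \in (0, \chi - 1/2)$ and a suitable constant $C$ (independent of $n$ and $p$).

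The approach is a two-scale Alexander-type iteration. Fix an integer $K \geq 2$. The key estimate I would establish is
\begin{equation*}
a_n \leq \frac{a_{Kn}}{K} + C' n^{1/2+\delta} \quad (n \text{ sufficiently large}),
\end{equation*}
which, setting $f(n) := a_n - n\mu_p \geq 0$, reads as the recursion $f(n) \leq f(Kn)/K + C' n^{1/2+\delta}$. Iterating through scales $n, Kn, \ldots, K^J n$, this telescopes to
\begin{equation*}
f(n) \leq \frac{f(K^J n)}{K^J} + C' n^{1/2+\delta} \sum_{j=0}^{J-1} K^{-j(1/2-\delta)}.
\end{equation*}
Since $\delta < 1/2$ the geometric series converges as $J\to\infty$, while $f(K^J n)/K^J = n \cdot f(K^J n)/(K^J n) \to 0$ because $a_m/m \to \mu_p$. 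Letting $J \to \infty$ gives $f(n) \leq C n^{1/2+\delta}$ with $C$ independent of $n$.

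The key estimate is proved by combining Theorem~\ref{conc;FPP} at scale $Kn$ with a matching lower bound from the checkpoint decomposition of the optimal $T_{Kn}$ path. Let $(0, x_n^*, x_{2n}^*, \ldots, x_{Kn}^*)$ be the checkpoints of this optimizer at times $jn$, $j=0,\ldots,K$. Then $T_{Kn}(\omega_p) = \sum_{j=0}^{K-1} C_j$, where each $C_j \geq T_n^{x_{jn}^*}(\tilde\omega^{(j)})$, the unconstrained passage time of length $n$ starting at $x_{jn}^*$ in the time-shifted environment $\tilde\omega^{(j)}$ on $(jn, (j+1)n]$. For each fixed $y \in \Z^d$, spatial translation invariance gives $T_n^y(\tilde\omega^{(j)}) \stackrel{d}{=} T_n(\omega_p)$, so the lower tail of Theorem~\ref{conc;FPP} yields $T_n^y(\tilde\omega^{(j)}) \geq a_n - n^{1/2+\delta}$ with probability at least $1 - C_1 e^{-C_2 n^\lambda}$. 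To upgrade this to a bound uniform in the random checkpoint $x_{jn}^*$, I would invoke the max-jump estimate of Section~\ref{KEY}: with high probability every $x_{jn}^*$ lies in a box of polynomial size $R = R(Kn)$ around the origin, so a union bound over its $O(R^d)$ lattice points produces the uniform lower-tail event at cost $O(R^d)\, C_1 e^{-C_2 n^\lambda}$. On the intersection with the upper-tail event $T_{Kn} \leq a_{Kn} + (Kn)^{1/2+\delta}$, one reads off $K(a_n - n^{1/2+\delta}) \leq a_{Kn} + (Kn)^{1/2+\delta}$, which rearranges (after absorbing $(Kn)^{1/2+\delta}/K$) to the key estimate. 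The main obstacle is this uniform lower-tail step: the union-bound loss $R^d \cdot C_1 e^{-C_2 n^\lambda}$ must remain negligible, which requires the stretched-exponential exponent $\lambda > 0$ of Theorem~\ref{conc;FPP} to dominate the polynomial growth of $R$ coming from Section~\ref{KEY}. Since all of these constants are $p$-independent, so is the final $C_0$.
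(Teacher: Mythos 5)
Your argument is correct and follows the same Zhang-type strategy the paper uses: establish an almost-super-additivity inequality for $a_n = Q[T_n(\omega_p)]$ by exhibiting a non-empty event on which a configuration-level version holds, then pass to expectations and iterate over scales. What differs is \emph{how} the super-additivity is produced. The paper introduces the face-to-face passage time $\Phi_n(n,2n;\omega_p)$, which by definition ignores the random checkpoint at time $n$, and then works on the small-but-positive-probability event $\mathcal{A}_2(n)$ that a face-to-face optimizer starts within distance $1/2$ of the origin; concentration is applied only to $T_n$, $T(n,2n)$ and $T_{2n}$. You avoid $\Phi_n$ and $\mathcal{A}_2$ entirely and instead union-bound the lower tail of $T_n^y$ over every candidate checkpoint $y$ (for each of the $K$ sub-windows), paying a polynomial union-bound factor in place of the paper's polynomially small probability for $\mathcal{A}_2$; both factors are harmless against the stretched-exponential tail from Theorem~\ref{conc;FPP}. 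Both approaches rely in an essential way on Lemma~\ref{28} to confine the checkpoints to a polynomial-size box. One point to tighten in your write-up: the ambient lattice is $s_p\Z^d$, with $s_p\to 0$ as $p\uparrow 1$, so the literal number of lattice sites in a box of side $R$ is $(R/s_p)^d$ and not $O(R^d)$ uniformly in $p$. To keep $C_0$ $p$-independent, one should union only over the $\omega_p$-points in the box (the only possible checkpoints); since each site is occupied with probability $1-p$ and $(1-p)/s_p^d = (1-p)/\log(1/p)$ is bounded over $p\in(0,1)$, a first-moment bound gives an effective count of $O(R^d)$ with a $p$-free constant. (The paper's Lemma~\ref{good-events} tiles the box by $y\in\Z^d$ and invokes translation invariance, which for $\omega_p$ holds only over $s_p\Z^d$; it implicitly relies on the same density observation.)
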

Combining \eqref{con1;FPP} and \eqref{con3;FPP}, we have, for any $\chi>1/2$ there exist positive constants $C_0,C_1,C_2,\lambda>0$ which are independent of $p$ such that for any $n\in\N$,
\begin{eqnarray}
Q[|T_n(\omega_p)-n\mu_p|>2C_0n^{\chi})&<&{}C_1\exp\{-C_2n^{\lambda}\}.\label{con2;FPP}
\end{eqnarray}
\begin{cor}\label{conti1}
$\mu_p$ is continuous in $p\in[0,1]$.
\end{cor}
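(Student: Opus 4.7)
The plan is to deduce continuity of $\mu_p$ via the elementary principle that a uniform limit of continuous functions is continuous. By Theorem~\ref{21}, we have $|\mu_p - n^{-1}Q[T_n(\omega_p)]| \leq C_0 n^{\chi-1}$ for any fixed $\chi \in (1/2,1)$, with $C_0$ independent of $p$. Hence $n^{-1}Q[T_n(\omega_p)]$ converges to $\mu_p$ uniformly in $p \in (0,1]$, and it suffices to prove that for each fixed $n$, the map $p \mapsto Q[T_n(\omega_p)]$ is continuous. Continuity at the endpoint $p=1$ is already known from~\cite{CFNY} via the convergence of $\omega_p$ to the Poisson limit $\omega_1$, and can be invoked directly.

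To establish continuity of $p \mapsto Q[T_n(\omega_p)]$ for a fixed $n$, I would decompose $T_n(\omega_p) = s_p^\alpha \tilde{T}_n(\eta)$, where
$\tilde T_n(\eta) = \min\set{\sum_{k=1}^n |y_{k-1}-y_k|^\alpha : y_0 = 0,\, \eta(k,y_k) = 0}$
is the unscaled passage time, depending only on $\eta$ and not on $p$. Since $s_p = (\log(1/p))^{1/d}$ is continuous on $(0,1]$, the task reduces to the continuity of $p \mapsto Q_p[\tilde T_n(\eta)]$. The natural tool is the monotone coupling $\eta_p(j,x) = \mathbf{1}\{U(j,x) \leq p\}$ with iid Uniform$[0,1]$ variables $U(j,x)$: the set of zeros shrinks as $p$ grows, so $\tilde T_n(\eta_p)$ is non-decreasing in $p$. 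For any $p_0 \in (0,1)$, almost surely $U(j,x) \neq p_0$ at every site, hence $\eta_p \to \eta_{p_0}$ pointwise as $p \to p_0$. Combined with the observation that any path of total cost at most $\tilde T_n(\eta_{p_0})+1$ is confined to a bounded region (long jumps are expensive), this yields $\tilde T_n(\eta_p) \to \tilde T_n(\eta_{p_0})$ almost surely. Dominated convergence, using $\tilde T_n(\eta_{p_0+\e})$ with $p_0 + \e < 1$ as a $Q_{p_0+\e}$-integrable majorant (its finite mean follows from the positive density of zeros at each level when $p<1$), then delivers continuity of $Q_p[\tilde T_n]$ at $p_0$.

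The main obstacle is the almost-sure continuity step, since $\tilde T_n$ is not continuous in $\eta$ in the product topology in general: the optimal path could, a priori, depend on zeros arbitrarily far from the origin. The essential feature that saves the argument is the strict penalization of long jumps built into the cost functional $|y_{k-1}-y_k|^\alpha$, which automatically confines near-optimal paths to a bounded region and reduces the problem to a finite-dimensional one, where pointwise convergence of $\eta_p$ immediately propagates to $\tilde T_n$.
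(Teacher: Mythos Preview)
Your overall strategy coincides with the paper's: deduce continuity of $\mu_p$ from the uniform bound $|\mu_p - n^{-1}Q[T_n(\omega_p)]| \le C_0 n^{\chi-1}$ of Theorem~\ref{21} together with continuity of $p \mapsto Q[T_n(\omega_p)]$ for each fixed $n$. The difference lies only in how the latter is established. The paper (Lemma~\ref{finite-vol}) argues that $T_n(\cdot)$ is continuous in the vague topology on locally finite point measures, invokes Skorohod's representation to realize $\omega_p \to \omega_{p'}$ almost surely, and then passes to the limit in expectation via the uniform integrability supplied by Lemma~\ref{greedy}. Your route---factoring $T_n(\omega_p) = s_p^\alpha \tilde T_n(\eta)$ and using the monotone coupling $\eta_p(j,x)=\mathbf{1}\{U(j,x)\le p\}$---is more hands-on and avoids both Skorohod and the point-process topology; the localization step you sketch (near-optimal paths are confined to a box, so only finitely many sites matter) is precisely the content of the vague-continuity claim in the paper's version. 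The cost is that your factorization degenerates at $p=1$ (where $s_p\to 0$ while $\tilde T_n\to\infty$), so you must treat that endpoint separately by citing~\cite{CFNY}, whereas the paper's vague-convergence argument handles $p=1$ on the same footing as interior values.
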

Corollary 1 is a slight extension of Theorem 1.5 in \cite{CFNY} where the continuity is proved only at $p=1$. This follows from Theorem \ref{21} and the continuity of the mean of the passage time in $p$. \\

Next, we move on to the results for directed polymer model. We have the following same properties as in FPP.
\begin{thm}
\label{conc;FE}
In the above setting, for any $q\in[0,1)$, $\delta>0$ and $\beta_0\in\mathbb{R}$, there exist $C_1,C_2>0$, and $\lambda\in(0,1)$ for any $p\in[0,q)$, $\beta\in[-\infty,\beta_0]$, and $n\in\N$,
\begin{eqnarray}
Q(|\log{Z^{\eta,\beta}_n}-Q[\log{Z^{\eta,\beta}_n}]>n^{\frac{1}{2}+\delta})<{}C_1\exp\{-C_2n^{\lambda}\}.\label{con1;FE}
\end{eqnarray}
\end{thm}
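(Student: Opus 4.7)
The plan is to follow the strategy used for Theorem \ref{conc;FPP}: combine a Doob martingale argument on the time-slice filtration with the maximum-jump control provided by Section \ref{KEY}.

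First, I set $\mathcal{F}_j := \sigma(\eta(k,x) : 1 \leq k \leq j,\ x \in \Z^d)$ and $M_j := Q[\log Z_n^{\eta,\beta} \mid \mathcal{F}_j]$, so that $M_0 = Q[\log Z_n^{\eta,\beta}]$ and $M_n = \log Z_n^{\eta,\beta}$. For any two environments $\eta, \eta'$ agreeing outside the $j$-th time slice, each path $X$ satisfies $|H_n^\eta(X) - H_n^{\eta'}(X)| = |\eta(j,X_j) - \eta'(j,X_j)| \leq 1$, so $e^{\beta H_n^\eta(X)}/e^{\beta H_n^{\eta'}(X)} \in [e^{-|\beta|}, e^{|\beta|}]$, yielding $|\log Z_n^{\eta,\beta} - \log Z_n^{\eta',\beta}| \leq |\beta|$ and therefore $|M_j - M_{j-1}| \leq |\beta|$. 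Azuma--Hoeffding then already proves \eqref{con1;FE} with constants that depend on $|\beta|$; the choice $t = n^{1/2+\delta}$ gives tail $2\exp(-n^{2\delta}/(2\beta^2))$, which is what we want whenever $\beta$ is finite and fixed.

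Second, to make the estimate uniform over $\beta \in [-\infty, \beta_0]$, I would invoke Section \ref{KEY} to replace $Z_n^{\eta,\beta}$ by its truncation $\tilde Z_n^{\eta,\beta}$, obtained by restricting the sum to paths whose increments are bounded by some slowly growing $R_n$. On a good event of $Q$-probability at least $1 - C_1' \exp(-C_2' n^{\lambda'})$, the difference $|\log Z_n^{\eta,\beta} - \log \tilde Z_n^{\eta,\beta}|$ is $O(1)$. On this event, only a bounded spatial window around $X_{j-1}$ is accessible to the polymer at time $j$, and the restriction $p < q < 1$ supplies a positive density $1-q$ of open sites in that window. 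This should allow one to upgrade the crude estimate $|\log \tilde Z_n^{\eta,\beta} - \log \tilde Z_n^{\eta',\beta}| \leq |\beta|$ to a refined $\beta$-free bound of the form $C(q, \beta_0)$: expanding $\log(1 + (e^\beta - 1)\mu_n^{\eta,\beta}(X_j=x))$ and using $|e^\beta - 1| \leq C(\beta_0)$ together with $\max_x \mu_n^{\eta,\beta}(X_j=x) \leq 1 - \varepsilon(q)$ (which follows from the available alternatives in the window) gives the required uniform increment bound on the truncated martingale.

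The main obstacle is exactly this last uniform estimate. Without it, the naive Azuma increment $|\beta|$ blows up as $\beta \to -\infty$, and the theorem cannot hold uniformly. The jump truncation from Section \ref{KEY} localizes the relevant part of each time slice, and the positive density of open sites rules out the degenerate scenario in which the Gibbs mass collapses onto a single trajectory through slice $j$. Once the $\beta$-free increment bound is in place, Azuma--Hoeffding applied to the truncated martingale, combined by a union bound with the exceptional-event tail from Section \ref{KEY}, yields \eqref{con1;FE} with $\lambda$ proportional to $\delta$ and constants depending only on $q$, $\delta$, and $\beta_0$.
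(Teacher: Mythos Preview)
Your first paragraph is correct for each fixed finite $\beta$, and you correctly identify that obtaining a $\beta$-free increment bound uniform over $[-\infty,\beta_0]$ is the whole difficulty. The gap is in the mechanism you propose for that bound.

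The claim $\max_x \mu_n^{\eta,\beta}(X_j=x)\le 1-\varepsilon(q)$ is false in general: at $\beta=-\infty$ the Gibbs measure is supported on minimizers of $T_n$, and when the minimizer is unique it passes through a single site $x^*$ at time $j$, giving $\mu_n^{\eta,-\infty}(X_j=x^*)=1$. The density of open sites guarantees only that \emph{rerouting} through a nearby open site is possible, not that the Gibbs mass is already spread out. (Your formula $\log(1+(e^\beta-1)\mu_n(X_j=x))$ is also the effect of flipping a single site, not of resampling the whole slice.) Nor does Section~\ref{KEY} justify restricting the partition function to paths whose every increment is $\le R_n$: Lemma~\ref{28} is a dichotomy at a fixed position $s$, not an a priori bound on all jumps of all contributing paths. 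The paper instead bounds $|\log Z_n(\bar\omega)-\log Z_n(\bar\omega')|$ (for $\bar\omega,\bar\omega'$ differing only on slice $s$) by applying the dichotomy of Lemma~\ref{28} path by path (Lemma~\ref{6}): if the jumps of $\gamma$ near $s$ are $\le n^\zeta$, moving $\gamma(s)$ to a nearby $\bar\omega'$-point costs at most $c_2 n^{(\alpha-1)\zeta+3\theta}$ in free energy; otherwise there is a $\gamma'$ agreeing with $\gamma$ outside $[s,s+k]$ with $F_n(\gamma';\bar\omega')\le F_n(\gamma;\bar\omega)-c_2(k+1)n^\theta$. Summing over all $\gamma$ (Lemma~\ref{5}) and using that the map $\gamma\mapsto\gamma'$ is at most $n^{O(k)}$-to-one---which is beaten by the gain $e^{c_2 kn^\theta}$---gives $Z_n(\bar\omega)\le e^{Cn^{(\alpha-1)\zeta+3\theta}}Z_n(\bar\omega')$, the required $\beta$-free bound. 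Your sketch has the right instinct (use Section~\ref{KEY}) but misses both the per-path rerouting and the counting argument that controls the many-to-one nature of $\gamma\mapsto\gamma'$.
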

\begin{thm}\label{concc;FE}
For any $q\in[0,1)$, $\chi>{1\over 2}$ and $\beta_0\in\mathbb{R}$, there exist $\epsilon,C_0-C_2>0$, and $\lambda\in(0,1)$ for any $p\in[0,q)$, $\beta\in[-\infty,\beta_0]$, $n\in\N$,
\begin{eqnarray}
|n\varphi(p,\beta)-Q[\log{Z^{\eta,\beta}_n}]|&<&C_0n^{\chi},\label{con11;FE}\\
Q[|\log{Z^{\eta,\beta}_n}-n\varphi(p,\beta)|>2C_0n^{\chi})&<&{}C_1\exp\{-C_2n^{\lambda}\}.\label{con2;FE}
\end{eqnarray}
\end{thm}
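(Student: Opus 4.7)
The probabilistic bound \eqref{con2;FE} follows from \eqref{con11;FE} combined with Theorem \ref{conc;FE} via the triangle inequality, so the content of Theorem \ref{concc;FE} is the deterministic rate \eqref{con11;FE}. Writing $a_n:=Q[\log Z_n^{\eta,\beta}]$, I would follow the strategy used for Theorem \ref{21}. For the first half $a_n\leq n\varphi(p,\beta)$, establish the superadditivity $a_{n+m}\geq a_n+a_m$: starting from the Markov splitting
\begin{equation*}
  Z_{n+m}^{\eta,\beta}=\sum_{y\in\Z^d}Z_n^{\eta,\beta}(0\to y)\,\tilde Z_m^{\eta,\beta}(y)
\end{equation*}
(with $\tilde Z_m^{\eta,\beta}(y)$ the partition function on $[n+1,n+m]$ started at $y$), apply Jensen's inequality to the polymer endpoint measure $\hat\mu_n(y)=Z_n^{\eta,\beta}(0\to y)/Z_n^{\eta,\beta}$ and take $Q$-expectation, using the independence of the environment on disjoint time slabs together with translation invariance; the inequality $a_{n+m}\geq a_n+a_m$ that results, combined with Fekete's lemma, gives $a_n\leq n\varphi(p,\beta)$.

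For the reverse inequality $n\varphi(p,\beta)-a_n\leq C_0 n^\chi$, use the iterated Markov decomposition at an auxiliary scale $N\gg n$ with $M=\lfloor N/n\rfloor$ blocks and $y_0=0$, combined with the telescoping bound
\begin{equation*}
  Z_N^{\eta,\beta}\leq Z_n^{\eta,\beta,(1)}\cdot\prod_{i=2}^M\sup_{y\in\Z^d}Z_n^{\eta,\beta,(i)}(y),
\end{equation*}
where $Z_n^{\eta,\beta,(i)}(y):=\sum_z Z_n^{\eta,\beta,(i)}(y\to z)$ and the $i$-th block uses the environment slab on $[(i-1)n+1,in]$. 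After restricting the supremum to a polynomial region $\Lambda$ (see below), the concentration \eqref{con1;FE} together with a union bound over $|\Lambda|$ starting points yields
\begin{equation*}
  \sup_{y\in\Lambda}\log Z_n^{\eta,\beta,(i)}(y)\leq a_n+n^{1/2+\delta}
\end{equation*}
with $Q$-probability at least $1-|\Lambda|\,C_1 e^{-C_2 n^\lambda}$. Taking $Q$-expectation (and absorbing the low-probability complement via the deterministic bound $|\log Z_N^{\eta,\beta}|\leq|\beta|N$) then gives $a_N\leq (N/n)\,a_n+N n^{-1/2+\delta}$, so $a_N/N\leq a_n/n+n^{-1/2+\delta}$; letting $N\to\infty$ yields $\varphi(p,\beta)\leq a_n/n+n^{-1/2+\delta}$, and choosing $\delta<\chi-\tfrac{1}{2}$ delivers \eqref{con11;FE}.

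The main obstacle is the restriction of $\sup_{y\in\Z^d} Z_n^{\eta,\beta,(i)}(y)$ to a polynomial set $\Lambda$: over all of $\Z^d$, this supremum (of i.i.d.\ copies of $Z_n^{\eta,\beta}$) diverges almost surely. This is precisely what the key jump-control lemma of Section~\ref{KEY} provides: with $Q$-probability at least $1-e^{-cn^\lambda}$, every path contributing non-negligibly to any $Z_n^{\eta,\beta,(i)}$ has maximum jump bounded by a slowly growing function of $n$, so the intermediate endpoints $y_i$ in the Markov decomposition are confined to a polynomial region and the supremum need only be taken over that region. Once this localization is in place, the argument outlined above closes and delivers \eqref{con11;FE}.
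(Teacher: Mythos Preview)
Your reduction of \eqref{con2;FE} to \eqref{con11;FE} via Theorem~\ref{conc;FE} is correct, and the superadditivity argument for $a_n\le n\varphi(p,\beta)$ is the standard one and works (in fact it is a little cleaner than the paper, which runs the Zhang argument in both directions). The gap is in the reverse bound. You split $Z_N$ into $M=\lfloor N/n\rfloor$ blocks and control $\sup_{y\in\Lambda}\log Z_n^{(i)}(y)$ by a union bound over the $|\Lambda|$ starting points and the $M$ blocks, using the concentration of Theorem~\ref{conc;FE} at scale $n$. But the localisation region $\Lambda$ for the intermediate endpoint at time $in$ is polynomial in $N$, not in $n$: a path with jumps bounded by $n^\zeta$ can reach displacement of order $in\cdot n^\zeta\le N n^\zeta$ by time $in$, so $|\Lambda|$ is of order $N^{d}$ or larger. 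The bad-event probability in your union bound is therefore of order $N^{c}e^{-C_2 n^\lambda}$ for some $c>1$, and after absorbing the complement via the bound $|\log Z_N|\le |\beta|N$ (which incidentally fails at $\beta=-\infty$, where no deterministic lower bound on $Z_N$ is available) the resulting error is of order $N^{c+1}e^{-C_2 n^\lambda}$. This diverges as $N\to\infty$ with $n$ fixed, so the inequality $a_N\le (N/n)a_n + Nn^{-1/2+\delta}$ does not hold uniformly in $N$ and the step ``letting $N\to\infty$'' does not close.

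The paper circumvents this by taking only $N=2n$ and iterating, following Zhang. At the doubling step the relevant box is $[-n^{M},n^M]^d$ with $M=1+\alpha^{-1}$, so polynomial in $n$, and the paper does not even use a union bound: by translation invariance the event $\mathcal{A}=\{Z(n,2n,0,\omega)=\sup_{|x|_\infty\le n^M/2}Z(n,2n,x,\omega),\ Z_{2n}\le 2\tilde Z_{2n}\}$ has probability at least $(2n^{Md})^{-1}$, which for large $n$ is compatible with the concentration bound $e^{-Cn^\lambda}$. On $\mathcal{A}$ intersected with the concentration events one reads off $a_{2n}\le 2a_n+Cn^\chi$, and iterating gives $a_{2^kn}/(2^kn)\le a_n/n + C'n^{\chi-1}\sum_{j\ge 0}2^{(\chi-1)j}$; the geometric series converges since $\chi<1$, and letting $k\to\infty$ yields $\varphi\le a_n/n + C''n^{\chi-1}$. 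Finally, note that the confinement of contributing paths to a polynomial box does not require the key Lemma~\ref{28} directly; it follows from the elementary estimate of Lemma~\ref{greedy}(ii), since paths leaving the box have passage time exceeding $n^{1+2\alpha\theta}$ and contribute only $e^{-Cn^{1+2\alpha\theta}}$ to $Z_n$. The key lemma is used upstream, in the proof of Theorem~\ref{conc;FE}.
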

Next theorem is an extension of Theorem 1.2 in \cite{CFNY} where the continuity is proved only for $\alpha<d$.
\begin{cor}\label{conccc;FE}
$\varphi(p,\beta)$ is jointly continuous on $[0,1)\times[-\infty,\infty)$.
\end{cor}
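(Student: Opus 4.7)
The plan is to realise $\varphi$ as a uniform limit of jointly continuous approximants and invoke the standard fact that uniform limits of continuous functions are continuous. Concretely, I set $f_n(p,\beta) := n^{-1} Q[\log Z_n^{\eta,\beta}]$ and fix arbitrary $q \in [0,1)$ and $\beta_0 \in \R$; Theorem~\ref{concc;FE} with a choice of $\chi \in (1/2, 1)$ provides a constant $C_0 > 0$ depending only on $q$ and $\beta_0$ such that
\[
\sup_{p \in [0,q),\, \beta \in [-\infty,\beta_0]} \bigl|\varphi(p,\beta) - f_n(p,\beta)\bigr| \leq C_0\, n^{\chi-1} \xrightarrow[n \to \infty]{} 0,
\]
so $f_n \to \varphi$ uniformly on each such rectangle.

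The substantive step is to show that $f_n$ is jointly continuous on $[0,1) \times [-\infty,\infty)$ for each fixed $n$. My approach is the monotone coupling $\eta_p(j,x) := \mathbf{1}\{U_{j,x} \leq p\}$, where $(U_{j,x})_{(j,x) \in \N \times \Z^d}$ are iid uniform on $[0,1]$ under some auxiliary law $\tilde Q$; then $\eta_p$ has the $Q$-law of $\eta$ at parameter $p$, and $f_n(p,\beta) = n^{-1}\tilde Q[\log Z_n^{\eta_p,\beta}]$. Given $(p_m,\beta_m) \to (p_0,\beta_0)$, the event $\bigcup_{(j,x)} \{U_{j,x} = p_0\}$ is $\tilde Q$-null, so for each $(j,x)$ outside this null set $\eta_{p_m}(j,x) = \eta_{p_0}(j,x)$ for all $m$ large enough. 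Writing
\[
Z_n^{\eta_p,\beta} = \sum_{(x_1,\ldots,x_n) \in (\Z^d)^n} \prod_{i=1}^n f(|x_{i-1}-x_i|_1)\,\exp\Bigl(\beta \sum_{i=1}^n \eta_p(i,x_i)\Bigr),
\]
each summand converges pointwise (interpreting $\exp(-\infty\cdot k) = \mathbf{1}\{k = 0\}$) and is dominated by $e^{(|\beta_0|+1)n}\prod_i f(|x_{i-1}-x_i|_1)$, or simply by $\prod_i f(|x_{i-1}-x_i|_1)$ when $\beta_0 = -\infty$; in either case the bound is summable over $(\Z^d)^n$. Dominated convergence in path space yields $Z_n^{\eta_{p_m},\beta_m} \to Z_n^{\eta_{p_0},\beta_0}$ $\tilde Q$-a.s., and positivity of the limit lets me pass to $\log$. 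A second dominated convergence, this time in $\tilde Q$ against the integrable envelope $(|\beta_0|+1)n + |\log Z_n^{\eta,-\infty}|$, then gives $f_n(p_m,\beta_m) \to f_n(p_0,\beta_0)$.

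Combining the two, $\varphi$ is a uniform limit of jointly continuous functions on every rectangle $[0,q) \times [-\infty,\beta_0]$, hence is jointly continuous there; letting $q \uparrow 1$ and $\beta_0 \uparrow \infty$ yields continuity on $[0,1) \times [-\infty,\infty)$. The delicate point I anticipate is verifying $Q$-integrability of $\log Z_n^{\eta,-\infty}$ by a bound uniform for $p \in [0,q)$, since this underpins the second dominated-convergence step and the treatment of the $\beta = -\infty$ boundary. A crude lower bound $Z_n^{\eta,-\infty} \geq \prod_{i=1}^n f(|X^*_i|_1)$, where $X^*_i$ is the nearest $\eta(i,\cdot)=0$ site to the previous location, should yield tails for $-\log Z_n^{\eta,-\infty}$ that are controlled uniformly for $p$ bounded away from $1$; keeping the constants uniform in $p$ is the only mildly subtle bookkeeping.
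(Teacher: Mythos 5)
Your proposal follows the same overall decomposition as the paper: use the non-random fluctuation bound \eqref{con11;FE} to realise $\varphi$ as a \emph{uniform} limit of $f_n(p,\beta)=n^{-1}Q[\log Z_n^{\eta,\beta}]$ on rectangles $[0,q)\times[-\infty,\beta_0]$, and then establish joint continuity of each $f_n$ (this is Lemma~\ref{iii}, which the paper proves together with Lemma~\ref{finite-vol} via the non-random fluctuation reduction). The only genuine, though minor, divergence is in \emph{how} the finite-$n$ continuity is obtained: the paper couples at the level of point processes, invoking the Skorohod representation theorem, vague convergence of $\omega_p$, continuity of $T_n(\cdot)$ in the vague topology, and uniform integrability from Lemma~\ref{greedy}; you instead implement an explicit monotone coupling $\eta_p(j,x)=\mathbf{1}\{U_{j,x}\le p\}$ on a single auxiliary probability space and apply dominated convergence twice (first in path space, then in $\tilde Q$). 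Your route is more elementary and self-contained, and it makes the monotonicity $\eta_{p}\le\eta_{q}$ for $p\le q$ explicit, which cleanly supplies the uniform envelope $(\,\beta_0^++1\,)n+|\log Z_n^{\eta_q,-\infty}|$ for the outer DCT; the paper's route is slicker but leaves the envelope and the $\beta$-dependence implicit behind the ``similar way'' remark. Both are correct; you might tighten the envelope by writing $\beta_0^+$ rather than $|\beta_0|$, but this is cosmetic.
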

\begin{remark}
Corollary~\ref{conti1} can in fact be proved by the ``coupling method'' in~\cite{CFNY}. Nevertheless, we think the line of the argument---proving the continuity of a limiting quantity like the time constant via a concentration bound---is of interest. In Corollary~\ref{conccc;FE}, the method of concentration indeed yields a better result than the rather bare-handed approach in~\cite{CFNY}. 
\end{remark}

\begin{remark}\label{choice}
These results still hold even if the definition of $f(k)$ is replaced by $c\exp{\{-V(k)\}}$ where $V$ holds either $0<C_1<V''(x)<C_2$ for any $x$ with some positive constant $C_1$ and $C_2$ or that $V'$ is regularly varying with index $\beta>0$. Indeed, the key lemma below can be shown by the essentially the same way for these choices.
\end{remark}
\subsection{Organization of the paper}
\hspace{2mm}\\

The rest of the paper is organized as follows. Section \ref{sec:concs} is devoted to the proofs of Theorems \ref{conc;FPP} and \ref{conc;FE}. As for FPP, we divide the proof into two parts; $\alpha\le 1$ or $\alpha>1$. We first prove the case $\alpha\le 1$ which is relatively easy. To prove the other case, we need key lemma in Section \ref{KEY}. This lemma is the essential part of this paper. In Section \ref{sec:fluc}, we prove Theorems \ref{21} and \ref{concc;FE}. Finally, we prove Corollaries \ref{conti1} and \ref{conccc;FE} in Section \ref{sec:cont}.\\
\hspace{2mm}\\

\section{Proof of the concentration around the mean}\label{sec:concs}
\subsection{Concentration for FPP with $0<\alpha\le 1$}
\hspace{2mm}\\

\begin{proof}[Proof of~\eqref{con1;FPP} for $0<\alpha\le 1$]
We prove~\eqref{con1;FPP} by using a martingale difference 
method. We introduce a filtration 
\begin{equation}
 \mathcal{G}_m=\sigma(\omega|_{[0,m]\times\R^d})
\end{equation}
and decompose the deviation from the mean into the sum of 
martingale differences as
\begin{equation}
\begin{split}
&T_n(\omega_p)-Q[T_n(\omega_p)]\\
&\quad=\sum_{m=1}^n(Q[T_n(\omega_p)|\mathcal{G}_m]
-Q[T_n(\omega_p)|\mathcal{G}_{m-1}])\\
&\quad=:\sum_{m=1}^n\Delta_m.
\end{split}
\end{equation}
We are going to prove that for some $c>0$
independent of $p$, 
\begin{equation}
 Q\left[\exp \set{c |\Delta_m|^{d/\alpha}}\left| 
\mathcal{G}_{m-1}\right.\right] \le c^{-1} \label{cond-exp}
\end{equation}
$Q$-almost surely. 
Then~\eqref{conc;FPP} 
follows by a concentration inequality for martingales, 
for example, Theorem 1.1 in~\cite{LW09}.
Let us introduce some notation. 
Given two configurations $\omega$ and $\omega'$ and $m\in\N$, 
we define a new configuration by
\begin{equation}
 [\omega, \omega']_m=
\omega|_{[0,m]\times\R^d}+
\omega'|_{[m+1,\infty)\times\R^d}.
\end{equation}
Let $\pi_n^{(m)}$ be a minimizing path for this configuration 
chosen by a deterministic algorithm (if not unique). 
For a time-space point $(k,x)\in \N\times\R^d$, we define
$(k,x)^{*\omega}$ to be a point in $\omega|_{\{k\}\times\R^d}$ 
closest to $(k,x)$.

Now we rewrite the martingale difference as 
\begin{equation}
\Delta_m=\int Q(\d \omega')(T_n([\omega, \omega']_m)
-T_n([\omega, \omega']_{m-1})).
\end{equation}
Then by bounding $T_n([\omega, \omega']_{m-1})$ by the passage time
of the path 
\begin{equation}
\pi_n^{(m)}(1),\ldots,\pi_n^{(m)}(m-1),\pi_n^{(m)}(m)^{*\omega'},
\pi_n^{(m)}(m+1),\ldots,\pi_n^{(m)}(n) 
\end{equation}
and using the fact that $\alpha\le 1$ implies
\begin{equation}
\begin{split}
&|\pi_n^{(m)}(m\pm 1)-\pi_n^{(m)}(m)^{*\omega'}|_1^\alpha\\
 &\quad\le |\pi_n^{(m)}(m\pm 1)-\pi_n^{(m)}(m)|_1^\alpha
 +|\pi_n^{(m)}(m)-\pi_n^{(m)}(m)^{*\omega'}|_1^\alpha, 
\end{split}
\label{triangular}
\end{equation}
we find 
\begin{equation}
 \Delta_m\ge -2\int Q(\d \omega')
 |\pi_n^{(m)}(m)-\pi_n^{(m)}(m)^{*\omega'}|_1^\alpha\label{lb-Delta}
\end{equation}
and similarly, 
\begin{equation}
 \Delta_m\le 2\int Q(\d \omega')
|\pi_n^{(m-1)}(m)-\pi_n^{(m-1)}(m)^{*\omega}|_1^\alpha.\label{ub-Delta}
\end{equation}
Note that $\pi_n^{(m)}(m)$ and $\pi_n^{(m-1)}(m)$ are independent 
of $\omega'|_{\{m\}\times\R^d}$ and $\omega|_{\{m\}\times\R^d}$ 
respectively. Then it follows that
the right-hand side of~\eqref{lb-Delta} is a 
deterministic constant and the right-hand side of~\eqref{ub-Delta} 
is an average of $|(m,x)-(m,x)^{*\omega}|^\alpha$, 
which is easily seen to satisfy~\eqref{cond-exp}. 
\end{proof}
\hspace{2mm}\\
\subsection{Key lemma (The uniform bound for jumps)}\label{KEY}
~\\

As we mentioned before, it is more difficult to prove Theorem~\ref{conc;FPP} in $\alpha>1$ case. Indeed, we need to estimate the change of minimum passage time when we replace the configuration on a section by another one, but we do not have the triangular inequality~\eqref{triangular}. A natural alternative way is to use Taylor's theorem to get
\begin{equation*}
\begin{split}
T_n([\omega, \omega']_m)-T_n([\omega, \omega']_{m-1})\geq{}&-C(|\pi_n^{(m)}(m+1)-\pi_n^{(m)}(m)|^{\alpha-1}_1|\pi_n^{(m)}(m)-\pi_n^{(m)}(m)^{*\omega'}|_1\\
&+|\pi_n^{(m)}(m)-\pi_n^{(m)}(m-1)|^{\alpha-1}_1|\pi_n^{(m)}(m)-\pi_n^{(m)}(m)^{*\omega'}|_1),
\end{split}
\end{equation*}
with some positive constant $C$. Then, the jump size of the optimal path appears in this change and we need to show that it is not too large. Such a geometric property of the optimal path is usually hard to establish but we can show that it has no jumps of polynomial size in $n$. We shall state it in a slightly generalized way that is useful in the study of the directed polymer model.\\

 For any $n$-path $\gamma\in{}(\R^d)^{\{1,\cdots,n\}}$, we write $\gamma(k):=$ the $k$th point of $\gamma,$ and $\Delta\gamma(k):=|\gamma(k)-\gamma(k-1)|_1$, where we put $\gamma(0)=0$ for the convention. Given a configration $\omega$ and $n$-path $\gamma$, we write $\gamma\subset\omega$ if $(i,\gamma(i))\in\omega$ for any $i=1,\cdots,n$. We say $\omega\subset{}\N\times{}\R^d$ has $\theta$-property if 
\begin{eqnarray}
\omega\cap{}(i,x+[0,n^{\theta})^d)\neq{}\emptyset\text{ for any $i\in\N$, $x\in\Z^d$}.
\end{eqnarray}

  Intuitively, $\theta$-property means configration has no big vacant regions. As we shall see in Lemma \ref{approx}, the assumption of $\theta$-property have a little influence on the minimum passage time.
\begin{Def}
  For any $n$-path $\gamma\in\R^{\{1,\cdots,n\}}$, $T_n(\gamma)$ is defined by
  \[T_n(\gamma)=\sum^{n}_{i=1}\Delta{}\gamma(i)^{\alpha}.\]
\end{Def}
The next lemma is the key of all results in this paper. Roughly speaking, for any polymer, either it has no big jumps or we can find another polymer with a smaller passage time. 
\begin{lem}\label{28}
Suppose $\alpha>{}1$. For any $\zeta>0$, there exist $\theta>0$ and $N=N(\theta)\in\mathbb{N}$ such that for any $n>N(\theta)$, $s\in\{1,\cdots,n\}$, an n-path $\gamma$ and a configuration $\omega'$ which has $\theta$-Property, either of the following holds;
\begin{enumerate}
 \item $\max\{\Delta\gamma(s),\Delta\gamma(s+1)\}\leq{}n^{\zeta}$,
 \item There exists an $n$-path $\gamma'$ and $k>0$ such that $\gamma(i)=\gamma'(i)$ for any $i\notin[s,s+k-1]$, $(i,\gamma'(i))\in\omega'$ for $i\in[s,s+k-1]$ and 
 \[T_n(\gamma')+(k+1)n^{\theta}\leq{}T_n(\gamma).\]
\end{enumerate}
\end{lem}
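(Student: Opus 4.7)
Assume (i) fails, so by symmetry I may take $L := \Delta\gamma(s) > n^\zeta$; the case where the big jump is at position $s+1$ is handled by an index shift. The plan is to construct a witness for (ii) by choosing some $k \geq 1$ and replacing the points $\gamma(s), \ldots, \gamma(s+k-1)$ by $k$ elements of $\omega'$ snapped, using the $\theta$-property, to within $\ell_\infty$-distance $n^\theta$ of a uniform subdivision of the straight segment from $\gamma(s-1)$ to $\gamma(s+k)$. The guiding intuition is that $x \mapsto x^\alpha$ is strictly convex for $\alpha > 1$, so a single jump of size $L$ costs strictly more than the same net displacement spread over several jumps of comparable size.

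The choice of $k$ is governed by the displacements $D_j := |\gamma(s-1) - \gamma(s+j)|$. In one regime I would take $k$ to be the smallest index $\geq 1$ with $D_k \leq L/2$. Then the original cost on $[s, s+k]$ is at least $L^\alpha$, while the replacement has each of its $k+1$ jumps bounded by $L/(2(k+1)) + O(n^\theta)$; using the elementary inequality $(a+b)^\alpha \leq a^\alpha + \alpha(a+b)^{\alpha-1} b$, the new cost is bounded above by a constant times $L^\alpha/(k+1)^{\alpha-1}$ plus snapping errors of order $(k+1) n^{\theta\alpha}$, so the savings are at least a positive constant (depending on $\alpha$) times $L^\alpha$. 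In the complementary regime, where $D_j > L/2$ persists, I would invoke a quantitative Jensen-type gap: the excess of $\sum_{i=0}^k \Delta\gamma(s+i)^\alpha$ over its constrained minimum $D_k^\alpha/(k+1)^{\alpha-1}$, forced by the presence of a jump equal to $L$ that exceeds the running average by a definite fraction, is again of order $L^\alpha$, and this gap matches the savings achievable by straightening.

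The parameter $\theta$ is then taken strictly less than $\zeta$ (with further dependence on $\alpha$) so that both the snapping error $(k+1) n^{\theta\alpha}$ and the required lower bound $(k+1) n^\theta$ are dominated by $L^\alpha \geq n^{\zeta\alpha}$. The main obstacle I expect is controlling the size of $k$: it must be small enough that the factor $(k+1) n^\theta$ does not swamp the savings, yet its a priori upper bound depends on the path. In the persistent-displacement regime this likely requires an iterative splitting argument on the sizes of subsequent jumps $\Delta\gamma(s+i)$, so that $k$ remains bounded by a power of $n$ strictly below what the savings estimate can absorb, uniformly in $\gamma$ and $\omega'$; keeping the $\alpha$-dependent constants explicit throughout is what makes the estimate uniform.
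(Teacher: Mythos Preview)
Your argument has a genuine gap at exactly the point you anticipate: the control of $k$. Neither regime closes. In the ``complementary regime'' your Jensen-gap claim is simply false when the path is already straight: if $\gamma(s-1+j)=jL$ for all $j\ge 0$, then every $D_j=(j+1)L>L/2$, yet straightening from $\gamma(s-1)$ to any $\gamma(s+k)$ reproduces the original path and saves nothing; the jump $L$ equals the running average and does not exceed it by any fraction. In this example (ii) holds only via a construction you never mention: take $k=n-s+1$ and let $\gamma'$ stay near $\gamma(s-1)$ with jumps $O(n^\theta)$, abandoning the endpoint entirely. In your first regime, the smallest $k$ with $D_k\le L/2$ can be of order $n^{1/2}$ (let $\gamma$ jump $L$ at step $s$, then remain at distance $L$ from $\gamma(s-1)$ by negligible steps for $n^{1/2}$ steps before crossing inward); then both the required gain $(k+1)n^\theta$ and the snapping error $(k+1)n^{\alpha\theta}$ are of order $n^{1/2+O(\theta)}$, which swamps your only guaranteed savings $L^\alpha=n^{\alpha\zeta}$ whenever $\alpha\zeta<1/2$. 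Your closing remark about an ``iterative splitting argument'' gestures toward what is needed but supplies none of it.

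The paper's proof is not a refinement of this sketch but a genuinely different multi-scale argument by contradiction. Assuming both (i) and (ii) fail, one sets $n^\tau=\Delta\gamma(s)$, introduces a nested family of cones of slopes $n^{\tau-(2K-1)\theta}$ emanating from $(s-1,\gamma(s-1))$, and records the first entry time $\ell_K$ of $\gamma$ into cone $K$ (if some $\ell_K$ is undefined, the escape construction above already gives (ii)). Applying the failure of (ii) to the straightened path between $\gamma(s-1)$ and $\gamma(\ell_{K+1})$ and using a second-order convexity inequality forces the number $\#D_{K+1}$ of jumps in $[s,\ell_{K+1}]$ of size at least $n^{\tau-2(K+1)\theta}$ to exceed $n^{\tau/2}\,\#D_K$. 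Iterating $m>5/\zeta$ times yields $\#D_m>n$, which is absurd. This scale-by-scale bootstrap on the \emph{number} of large jumps is precisely the substitute for the direct bound on $k$ that your approach lacks.
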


The proof of Lemma 1 is not long but a bit complicated. 
Let us explain the idea of the proof in the case that $\gamma$ is the minimizing
path for $\omega'$. Let $A_0$ be the point where next jump is larger than $n^\zeta$
and introduce a sequence of large numbers satisfying 
$n^\zeta\gg\mathcal{L}_1\gg\mathcal{L}_2\gg\cdots$. 
\begin{figure}[here]
\fbox{
   \includegraphics[width=40mm]{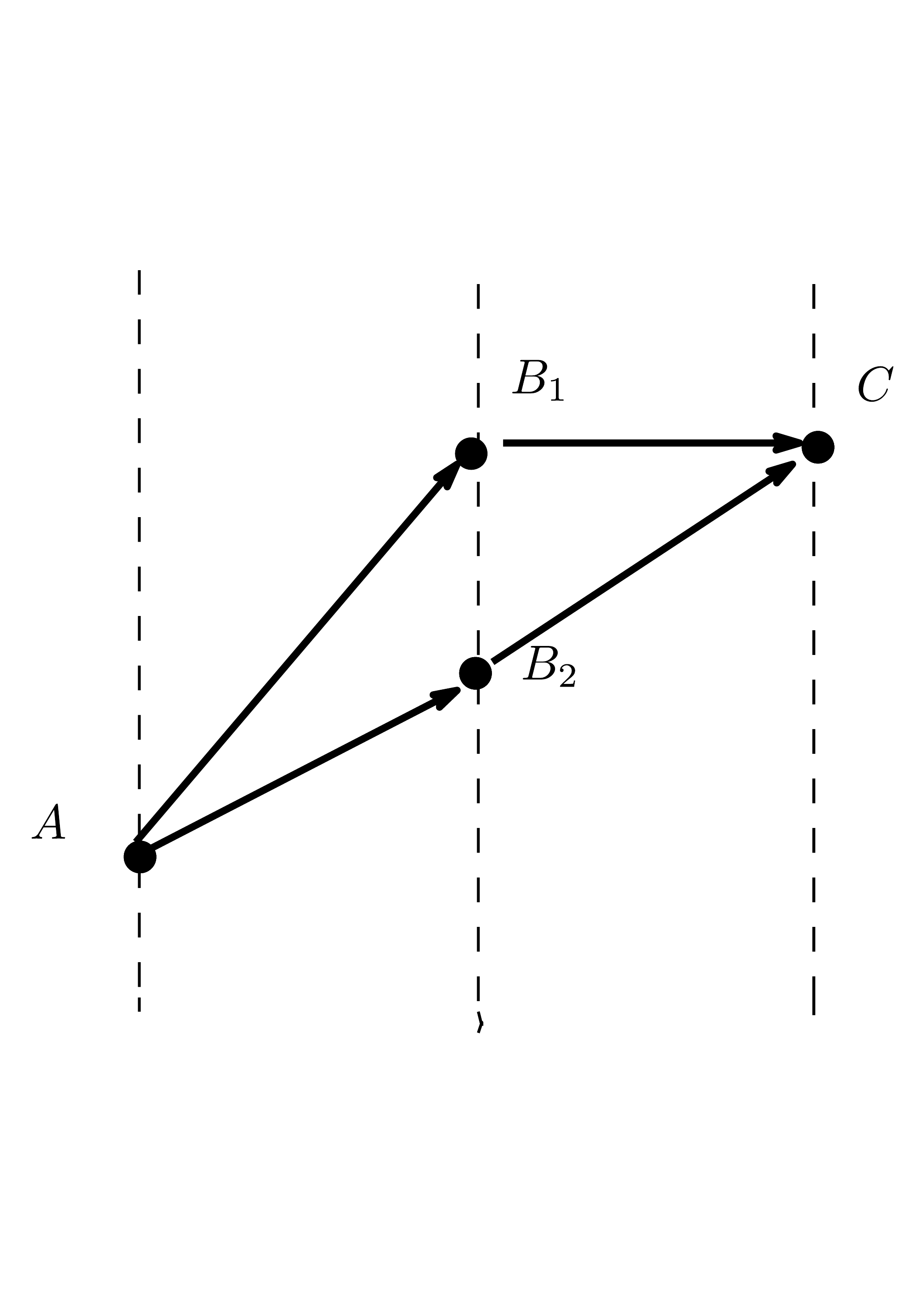}
}
\hspace{5mm}
\fbox{
  \includegraphics[width=40mm]{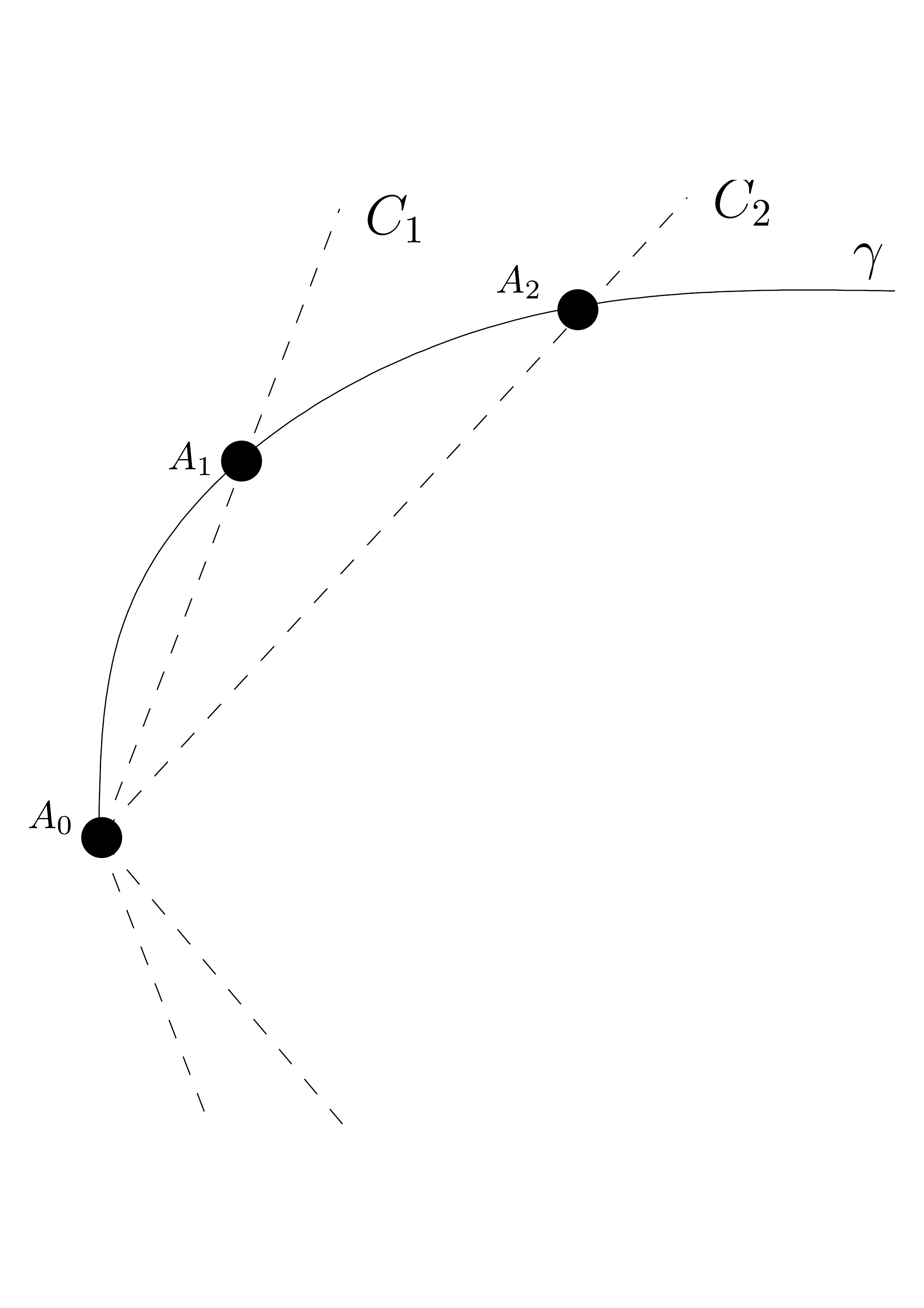}
}
\caption{}
Left: Passage time of $AB_2{}C$ is smaller than that of $AB_1{}C$ when $\alpha>1$.\\
Right: Drawing cones, we investigate $\gamma$ between $A_0$ and $A_k$ inductively.
  \label{fig:two}
\end{figure}

We draw cones $C_k$ with slope $\mathcal{L}_k$ starting at $A_0$ and let $A_k$ be the first point where the optimal path $\gamma$ touches it. 
Then we draw a straight line between $A_0$ and $A_k$. Thanks to the 
$\theta$-property, we can find a path $\gamma_k'\subset \bar{\omega}$ close
to this straight line. Due to the convexitity of $|\cdot|^{\alpha}_1$, average jumps are better (See Figure \ref{fig:two} above). This yields that, if we forget about the restriction $\subset\bar\omega$, the straight line connecting 
$A_0$ and $A_k$ is \emph{the optimal path} when $\alpha>1$. 
On the other hand, by definition, $\gamma$ has a smaller passage time than 
$\gamma_k'$ does. Then it is natural to expect that most of the jumps of 
$\gamma$ between $A_0$ and $A_k$ are close to $\mathcal{L}_k$ in size. 
But in the first $A_0\to A_{k-1}$ segment, most of the jumps are close to
$\mathcal{L}_{k-1}$ for the same reason and hence far from $\mathcal{L}_k$. 
This implies that the duration of $A_0\to A_k$ is much longer than that of
$A_0\to A_{k-1}$. Based on this observation, we can prove that $A_k$ goes
beyond the $n$-th section before $\mathcal{L}_k$ becomes small. 
But this implies that the optimal path $\gamma$ stays outside a cone of
very large angle until $n$. This is very unlikely because optimal path has too much passage time between $A_0$ and $n$-th section and one can indeed derive a
contradiction. 
\begin{proof}
Let $m\in\N$ such that $m>5/\zeta$, and we take $\theta>0$ sufficiently small and $N=N(\theta)\in\mathbb{N}$ such that sufficiently large. (For example, if  $\theta>0$ and $N$ such that \[2^5m\alpha\theta<\zeta ~~\text{and}~~ N^{\min\{\alpha-1,1\}\theta}>2^{\alpha+4}\alpha^2(\alpha-1)^{-1}+(2^{\frac{1}{\alpha}}-1)^{-1}),\]one can see that the following proof works.) Suppose that neither (i) nor (ii) holds and we shall derive a contradiction. This means we suppose the following two conditions;\\

\begin{itemize}
 \item[(i')]$\max\{\Delta\gamma(s),\Delta\gamma(s+1)\}>n^{\zeta}$,
 \item[(ii')]For any an $n$-path $\gamma'$ and $k>0$ such that $\gamma(i)=\gamma'(i)$ for $i\notin[s,s+k]$, and $(i,\gamma'(i))\in\omega'$ for $i\in[s,s+k]$, $T_n(\gamma')+(k+1)n^{\theta}>{}T_n(\gamma)$.
\end{itemize}

Due to (i'), either $\Delta\gamma(s)>{}n^{\zeta}$ or $\Delta\gamma(s+1)>n^{\zeta}$. Let $\tau$ be such that $n^{\tau}=\Delta\gamma(s)$. Then, taking a path $\gamma'$ such that $\gamma'(i)=\gamma(i)$ for any $i\neq{}s$, $(s,\gamma'(s))\in\omega'$ and $|{\gamma(s-1)+\gamma(s+1) \over 2}-\gamma'(s)|_1\le dn^{\theta}$, we have 
\begin{equation*}
T_n(\gamma)-T_n(\gamma')\geq{}\Delta\gamma(s)^{\alpha}+\Delta\gamma(s+1)^{\alpha}-2\left({\Delta\gamma(s+1)+\Delta\gamma(s)\over{}2}+dn^{\theta}\right)^{\alpha}.
\end{equation*}
If $2\tau<\zeta$, this is further bounded from below by $2n^{\theta}$, which contradicts (ii'). Hence we have $2\tau\geq{}\zeta$.\\

For any $k\leq{}m$, let
\begin{eqnarray}
\ell_{k}:=\inf\{\ell\in[s,n]\cap\mathbb{N}:\frac{|\gamma(\ell)-\gamma(s-1)|_1}{\ell-s+1}\leq{}n^{\tau-(2k-1)\theta}\}.\label{1}
\end{eqnarray}
By the $\theta$-property of $\omega'$, there exists a path $\gamma'$ such that $\gamma(i)=\gamma'(i)$ for any $i\leq{}s-1$ and $(i,\gamma(i))\in\omega'$, $\Delta\gamma(i)\leq{}2dn^{\theta}$ for any $i\geq{}s$. If the range of infimum of (\ref{1}) is empty, then 
\begin{eqnarray}
T_n(\gamma)-T_n(\gamma')-(n-s+1)n^{\theta}\geq{}(n-s+1)(n^{\tau-(2k-1)\theta}-2dn^{\theta}-n^{\theta})\geq{}0,
\end{eqnarray}
and (ii) follows. Hence, we can suppose the range is not void for $k\leq{}m$.\\

Let $\bar{\gamma}_k$ be a straight line drawn between $(s-1,\gamma(s-1))$ and $(\ell_k, \gamma(\ell_k))$. We write a slope $\frac{|\gamma(\ell_k)-\gamma(s-1)|_1}{\ell_k-s+1}$ of $\bar{\gamma}_k$ as $\mathcal{L}_k$. Since $\omega'$ has $\theta$-property, there exists a n-path $\gamma'_k$ such that $(i,\gamma(i))\in\omega'$, $|\bar{\gamma}_k(j)-\gamma'_k(j)|_1\leq{}dn^{\theta}$ for all $s\leq{}j<\ell_k$, $\gamma'_k(\ell)=\gamma(\ell)$ for $\ell\leq{}s-1$ or $\ell\geq\ell_k$. Set $R_k:=\ell_k-s$, $D_k:=\{0\leq{}i\leq{}R_k:\Delta\gamma(s+i)\geq{}n^{\tau-2k\theta}\}$, $D_k^c:=\{0\leq{}i\leq{}R_k\}\setminus{}D_k$.\\

We will show that $\#D_{K+1}\geq{}n^{\tau/2}\#D_K$ for any $K<m$.
By iteration, this yields $\#D_{m}\geq{}n^{\tau{}m/2}\geq{}n^{5/4}$, 
a contradiction. 
\\

Set $\delta_{j,K}:=\Delta\gamma(s+j)-\mathcal{L}_{K}$ and $\delta'_{j,K}:=\Delta\gamma_{K}'(s+j)-\mathcal{L}_{K}$.
We note that $\sum^{R_K}_{i=0}\delta_{i,K}, \sum{}\delta'_{i,K}\geq{}0$, and $|\delta'_{j,K}|\leq{}2dn^{\theta}$.\\

We first prove that
\begin{eqnarray*}
n^{\tau-(2K-1)\theta}-n^{\tau-2K\theta}\leq{}\mathcal{L}_K\leq{}n^{\tau-(2K-1)\theta},
\end{eqnarray*}
that is, $\mathcal{L}$ does not overshoot. Our strategy is to show that either the last jump $\Delta\gamma(s+R_K)$ is small or $R_K$ is large. We choose $k'$ such that $n^{k'}=\Delta\gamma(s+R_K)$. By (ii'), 
\begin{eqnarray}
  \sum^{R_K}_{i=0}\{(\mathcal{L}_{K}+\delta'_{i,K})^\alpha-\mathcal{L}_{K}^\alpha+n^{\theta}\}
  \geq{}\sum^{R_K}_{i=0}\{(\mathcal{L}_{K}+\delta_{i,K})^\alpha-\mathcal{L}_{K}^{\alpha}-\alpha\mathcal{L}_{K}^{\alpha-1}\delta_{i,K}\}.\label{ohon}
\end{eqnarray}
The left hand side of \eqref{ohon} is bounded from above by
\begin{eqnarray*}
  \sum^{R_K}_{i=0}\{(\mathcal{L}_{K}+\delta'_{i,K})^\alpha-\mathcal{L}_{K}^\alpha+n^{\theta}\}\le \sum^{R_K}_{i=0}\{4d\alpha{}\mathcal{L}_{K}^{\alpha-1}n^{\theta}+n^{\theta}\}\le R_Kn^{\tau(\alpha-1)+2\theta},
\end{eqnarray*}
where we have used $|a+b|^{\alpha}-|a|^{\alpha}\le \alpha|b-a|(|a|+|b|)^{\alpha-1}$ for $a,b\in\R$ in the first inequality. 
On the other hands, the right hand side is bounded from below by
\begin{eqnarray*}
&~&{}\sum^{R_K}_{i=0}\{(\mathcal{L}_{K}+\delta_{i,K})^\alpha-\mathcal{L}_{K}^{\alpha}-\alpha\mathcal{L}_{K}^{\alpha-1}\delta_{i,K}\}\\
&\geq&{}\left((\mathcal{L}_{K}+\delta_{R_K,K})^\alpha-\mathcal{L}_{K}^{\alpha}-\alpha\mathcal{L}_{K}^{\alpha-1}\delta_{R_K,K}\right)+((\mathcal{L}_{K}+\delta_{0,K})^\alpha-\mathcal{L}_{K}^{\alpha}-\alpha\mathcal{L}_{K}^{\alpha-1}\delta_{0,K})\\
  &\geq&\max\{\frac{n^{k'\alpha}}{2},\frac{n^{\tau\alpha}}{2}\},
\end{eqnarray*}
where we have used the fact that 
\begin{eqnarray}
(b-a)^{\alpha}-b^{\alpha}+\alpha{}ab^{\alpha-1}\geq{}0 \text{\hspace{4mm}for any }0<b\text{ and }a\leq{}b\label{kippo2}
\end{eqnarray}
 in the first inequality and $\delta_{0,K}=\Delta\gamma(s)-\mathcal{L}_{K}\geq{}n^{\tau}/2\gg{}n^{\tau-(2K-1)\theta}\geq\mathcal{L}_K$ in the second inequality. Consequently, we have
 \[
R_K\geq{}{1 \over 2}\max\{n^{k'\alpha-(\alpha-1)\tau-2\theta},n^{\tau-2\theta}\}\geq{}\max\{n^{k'-3\theta},n^{\tau-3\theta}\}.\]
This yields
\begin{eqnarray*}
\mathcal{L}_K&\geq&{}\frac{1}{R_K+1}(|\gamma(\ell_K-1)-\gamma(s-1)|_1-\Delta\gamma(s+R_K))\\
&\geq&n^{\tau-(2K-1)\theta}(1-\frac{1}{R_K+1})-n^{3\theta}\\
&\geq&n^{\tau-(2K-1)\theta}-n^{\tau-2K\theta}.
\end{eqnarray*}

We shall get back to the main proof. Due to (ii'),
\begin{eqnarray}
\begin{split}
\sum^{R_{K+1}}_{i=0}\{(\mathcal{L}_{K+1}+\delta_{i,K+1})^{\alpha}-\mathcal{L}_{K+1}^{\alpha}-\alpha\mathcal{L}_{K+1}^{\alpha-1}\delta_{i,K+1}\}\\
\leq{}\sum^{R_{K+1}}_{i=0}\{(\mathcal{L}_{K+1}+\delta'_{i,K+1})^{\alpha}-\mathcal{L}_{K+1}^{\alpha}+n^{\theta}\}.
\end{split}\label{HaS}
\end{eqnarray}
When $i\in{}D^c_{K+1}$, we have $-\delta_{i,K+1}\geq\mathcal{L}_{K+1}-n^{\tau-2(K+1)\theta}$ and hence
\begin{equation}
\begin{split}
(\mathcal{L}_{K+1}+\delta_{i,K+1})^{\alpha}-\mathcal{L}_{K+1}^{\alpha}-\alpha\mathcal{L}_{K+1}^{\alpha-1}\delta_{i,K+1}\hspace{30mm}\\
\geq{}-\mathcal{L}_{K+1}^{\alpha}+\alpha\mathcal{L}_{K+1}^{\alpha-1}(\mathcal{L}_{K+1}-n^{\tau-2(K+1)\theta})
\geq\frac{\alpha-1}{2}\mathcal{L}_{K+1}^{\alpha}.
\end{split}\label{kippo}
\end{equation}
We can estimate the left hand side of (\ref{HaS}) further bounded from below as
\begin{eqnarray*}
\text{LHS of (\ref{HaS})}&\geq&\sum_{i\in{}D_{K}}\{(\mathcal{L}_{K+1}+\delta_{i,K+1})^{\alpha}-\mathcal{L}_{K+1}^{\alpha}-\alpha\mathcal{L}_{K+1}^{\alpha-1}\delta_{i,K+1}\}+\frac{\alpha-1}{2}\sum_{i\in{}D_{K+1}^c}\mathcal{L}^{\alpha}_{K+1},
\end{eqnarray*}
where we have used \eqref{kippo2}, $D_K\subset D_{K+1}$ and \eqref{kippo}. As for the right hand side, we have
\begin{eqnarray*}
\text{RHS of (\ref{HaS})}&\leq&\sum_{i\in{}D_{K+1}}\{(\mathcal{L}_{K+1}+\delta'_{i,K+1})^{\alpha}-\mathcal{L}_{K+1}^{\alpha}+n^{\theta}\}+\frac{\alpha-1}{2}\sum_{i\in{}D_{K+1}^c}\mathcal{L}^{\alpha}_{K+1}\\
&\leq&\sum_{i\in{}D_{K+1}}4d\alpha{}n^{(\alpha-1)(\tau-(2K+1)\theta)+\theta}+\frac{\alpha-1}{2}\sum_{i\in{}D_{K+1}^c}\mathcal{L}^{\alpha}_{K+1}
\end{eqnarray*}
 by the definition of $\mathcal{L}_{K+1}$. 
From these, we get 
\begin{eqnarray*}
  4d\alpha\#D_{K+1}n^{(\alpha-1)(\tau-(2K+1)\theta)+\theta}
&=&4d\alpha\sum_{i\in{}D_{K+1}}n^{(\alpha-1)(\tau-(2K+1)\theta)+\theta}\\
&\geq&\sum_{i\in{}D_{K}}\{(\mathcal{L}_{K+1}+\delta_{i,K+1})^{\alpha}-\mathcal{L}_{K+1}^{\alpha}-\alpha\mathcal{L}_{K+1}^{\alpha-1}\delta_{i,K+1}\}\\
  &\geq&\#D_{K}\min_{i\in{}D_K}\{(\mathcal{L}_{K+1}+\delta_{i,K+1})^{\alpha}-\mathcal{L}_{K+1}^{\alpha}-\alpha\mathcal{L}_{K+1}^{\alpha-1}\delta_{i,K+1}\}\\
&\geq&{}\frac{1}{2}\#D_Kn^{\alpha(\tau-2K\theta)},
\end{eqnarray*}
where we have used $\delta_{i,K+1}$ is much larger than $\mathcal{L}_{K+1}$ for
$i\in D_K$ in the last inequality. Rearranging yields 
$\#D_{K+1}\ge n^{\tau/2}\#D_K$ for a sufficiently small $\theta$ 
as desired. 
\end{proof}
\hspace{2mm}\\
\subsection{Concentration for FPP with $\alpha>1$}
~\\

In this subsection, we prove Theorem \ref{conc;FPP} for $\alpha>1$. We fix a small $\theta>0$ and define 
 \begin{equation}\label{2.2}
 \bar{\omega}=\omega+\sum_{(k,x)\in \N\times n^\theta\Z^d}
 1_{\{\omega(\{k\}\times (x+[0,n^\theta)^d))=0\}}\delta_{(k,x)},
 \end{equation}
that is, 
when we find a large vacant box, we add an $\omega$-point 
artificially at a corner. 
We first recall Lemma 3.3 in~\cite{CFNY} which shows that $T_n(\omega_p)$ and $T_n(\bar\omega_p)$  are essentially the same.
\begin{lem}
\label{approx}
There exists $C_4>0$ such that for sufficiently large $n\in\N$, 
\begin{eqnarray}
\begin{split}
&\max\{Q(T_n( \omega_p)\neq T_n( \bar\omega_p)),
 Q[|T_n( \omega_p)-T_n( \bar\omega_p)|]\}\\
&\quad \le \exp\{-C_4n^{d\theta}\}.
\end{split}
\end{eqnarray}
\end{lem}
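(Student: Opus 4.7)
The plan is to exploit the fact that $\bar\omega_p$ differs from $\omega_p$ only inside boxes of the form $B_{k,x}:=\{k\}\times(x+[0,n^\theta)^d)$ that contain no $\omega_p$-point, and that such emptiness is extremely rare. First I would bound the probability that a single box $B_{k,x}$ with $k\le n$ is empty. The box contains $\lfloor n^\theta/s_p\rfloor^d$ independent lattice sites each carrying $\eta=1$ with probability $p$, and since $s_p^d=\log(1/p)$ this gives
\[
Q\bigl(\omega_p(B_{k,x})=0\bigr)=p^{\lfloor n^\theta/s_p\rfloor^d}=\exp\{-(1+o(1))\,n^{d\theta}\}.
\]

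Next I would localize: by comparing $T_n(\omega_p)$ with the explicit path that hops at each step to the $\omega_p$-point nearest (in the next column) to the previous location, one has $T_n(\omega_p)\le C n^{1+\theta\alpha}$ with $Q$-probability at least $1-\exp\{-cn^{d\theta}\}$. On this event any optimizer is confined to a spatial ball of radius $n^{O(1)}$ around the time-axis, so only $n^{O(1)}$ boxes are relevant. A union bound over these boxes yields
\[
Q\bigl(T_n(\omega_p)\neq T_n(\bar\omega_p)\bigr)\le Q\bigl(\exists\text{ relevant empty box}\bigr)\le \exp\{-C_4 n^{d\theta}\},
\]
since on the complementary event no relevant $\omega$-point is added, so $\omega_p=\bar\omega_p$ in the region visited by optimal paths and the two passage times coincide.

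For the $L^1$-bound, observe $T_n(\bar\omega_p)\le T_n(\omega_p)$ because $\omega_p\subset\bar\omega_p$, so $|T_n(\omega_p)-T_n(\bar\omega_p)|$ vanishes off the bad event above. Combining Cauchy--Schwarz with a polynomial second-moment bound $Q[T_n(\omega_p)^2]\le n^{O(1)}$---itself obtained from the nearest-point construction together with the finite $2\alpha$-moment of the distance to the nearest $\omega_p$-point in a column---yields
\[
Q[|T_n(\omega_p)-T_n(\bar\omega_p)|]\le Q[T_n(\omega_p)^2]^{1/2}\,Q(\text{bad event})^{1/2}\le \exp\{-C_4 n^{d\theta}\},
\]
after redefining $C_4$ to absorb polynomial prefactors.

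The main obstacle is the a priori localization step: one must establish, with $Q$-probability at least $1-\exp\{-cn^{d\theta}\}$, both that $T_n(\omega_p)$ is polynomially bounded and that any near-optimal path stays in a polynomial spatial window; otherwise the union bound has too many boxes to beat the exponential tail. Once localization is in hand the remainder is a straightforward union bound together with a moment estimate.
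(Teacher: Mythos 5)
Your reconstruction is correct and is essentially the argument behind Lemma 3.3 in \cite{CFNY}, which the paper simply cites. In particular, the key observation that the scaling $s_p=(\log\frac1p)^{1/d}$ makes the empty-box probability $p^{\lfloor n^\theta/s_p\rfloor^d}\approx\exp\{-n^{d\theta}\}$ essentially independent of $p$ is exactly what justifies the paper's one-line remark that ``the same proof works for general $\omega_p$,'' and your localization (greedy path plus Lemma~\ref{greedy}-(i)), union bound, and Cauchy--Schwarz step are the standard way to finish.
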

\begin{proof}
This is proved only for $\omega_1$ in~\cite{CFNY} but the same proof 
works for general $\omega_p$.
\end{proof}

\begin{proof}[Proof of~\eqref{con1;FPP} for $\alpha> 1$]
Fix $\delta>0$. Let us denote by $\bar\omega_p^{(m)}$ the point process obtained by replacing its $\{m\}\times\R^d$-section by another configuration
$\bar\omega'$. 
We are going to use the so-called entropy method and it 
requires a bound on
\begin{eqnarray}
\sum_{m=1}^n\left(\sup_{\omega'}|T_n(\bar\omega_p^{(m)})
-T_n(\bar\omega_p)|\right)^2,
\label{ES}
\end{eqnarray}
where the supremum is taken over all configrations $\omega'$.

Let $\pi_n$ be a minimizing path for $\bar{\omega}_p$. Note that this depends implicitly on $\theta$ through \eqref{2.2}.
\begin{lem}\label{2}
Suppose $\alpha>1$. For any $\zeta>0$, there exists $\theta\in(0,\zeta)$ and \\
$N=N(\theta)\in\mathbb{N}$  such that for any $n>N$ and $1\le i\leq{}n$,
\[\Delta\pi_n(i)\leq{}n^{\zeta}.\]
\end{lem}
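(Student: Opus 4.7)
The plan is to reduce Lemma \ref{2} directly to the key Lemma \ref{28}. The crucial structural fact is that the enlarged configuration $\bar{\omega}_p$ introduced in \eqref{2.2} has the $\theta$-property by construction: the definition adds a corner point $(k,x)$ to $\bar{\omega}_p$ whenever the box $\{k\}\times(x+[0,n^\theta)^d)$ with $x\in n^\theta\Z^d$ is devoid of $\omega_p$-points, and every box $\{k\}\times(y+[0,n^\theta)^d)$ with $y\in\Z^d$ intersects one of these dyadic boxes. Hence Lemma \ref{28} is applicable to $\omega'=\bar{\omega}_p$.

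Fix $\zeta>0$ and let $\theta\in(0,\zeta)$ and $N=N(\theta)$ be the parameters furnished by Lemma \ref{28}. Suppose, toward contradiction, that for some $n>N$ there exists $i\in\{1,\ldots,n\}$ with $\Delta\pi_n(i)>n^\zeta$. Apply Lemma \ref{28} with $\gamma=\pi_n$, $\omega'=\bar{\omega}_p$, and $s$ chosen so that either $\Delta\gamma(s)=\Delta\pi_n(i)$ or $\Delta\gamma(s+1)=\Delta\pi_n(i)$ (take $s=i$, or $s=i-1$ if $i=n$, to accommodate the boundary). Alternative (i) of Lemma \ref{28} is then violated, so alternative (ii) must hold: there exist $k>0$ and a path $\gamma'$ with $\gamma'(j)=\pi_n(j)$ for $j\notin[s,s+k-1]$, $(j,\gamma'(j))\in\bar{\omega}_p$ for $j\in[s,s+k-1]$, and
\[
T_n(\gamma')\le T_n(\pi_n)-(k+1)n^\theta<T_n(\pi_n).
\]

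Because $\pi_n\subset\bar{\omega}_p$, the identity $\gamma'(j)=\pi_n(j)$ on the complement of $[s,s+k-1]$ ensures $(j,\gamma'(j))\in\bar{\omega}_p$ there as well; thus $\gamma'$ is an admissible $n$-path through $\bar{\omega}_p$ starting at $0$. This contradicts the minimality of $\pi_n$ for the configuration $\bar{\omega}_p$. Consequently $\Delta\pi_n(i)\le n^\zeta$ for every $1\le i\le n$, proving the lemma.

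No real obstacle remains once Lemma \ref{28} is granted; the argument is essentially a direct invocation of the key lemma against the minimality of $\pi_n$. The only mild subtlety is the boundary case $i=n$, where $\Delta\gamma(s+1)$ could be undefined for $s=n$, and the cosmetic verification that $\bar{\omega}_p$ really does have the $\theta$-property used in Lemma \ref{28}; both are handled by the observations above.
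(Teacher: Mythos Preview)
Your proof is correct and follows essentially the same approach as the paper: apply Lemma~\ref{28} with $\gamma=\pi_n$ and $\omega'=\bar{\omega}_p$, and observe that alternative~(ii) contradicts the minimality of $\pi_n$, forcing alternative~(i). You have merely unpacked a few details the paper leaves implicit (the $\theta$-property of $\bar{\omega}_p$, the admissibility of $\gamma'$, and the boundary case $i=n$), but the argument is the same.
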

\begin{proof}
  Let $\omega'=\bar{\omega}_p$ $\gamma=\pi_n$ in Lemma \ref{28}. Then for any $s\in\{1,\cdots,n\}$, either (i) or (ii) holds. If (ii) holds, it contradicts that $\pi_n$ is a minimizing path. It follows that (i) holds and we get the desired conclusion.
\end{proof}

By using Lemma \ref{2}, we can bound the summands of $($\ref{ES}$)$ from above by $n^{o(1)}$ and consequently \eqref{ES} itself by $n^{1+o(1)}$ a.s.
\begin{lem}
\label{mart-diff}
For any $\zeta>0$, there exists two positive constants $\theta,N$ such that for all  $n\ge{}N$ and $\omega_p$, 
\begin{equation}
  |\Delta_m|\le 4{}n^{\zeta\alpha},
\end{equation}
  where $\Delta_m:=\sup_{\omega'}|T_n(\bar\omega_p^{(m)})
-T_n(\bar\omega_p)|$.
\end{lem}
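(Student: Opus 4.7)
The plan is to bound $|\Delta_m|$ in both directions by constructing, for each configuration, a competitor path obtained from the minimizing path of the other configuration by a small local modification at level $m$. For the upper direction I use Lemma \ref{2} to control the jumps of the minimizer $\pi_n$ for $\bar\omega_p$; for the lower direction, where no analogue of Lemma \ref{2} is available for the minimizer $\pi_n'$ of $\bar\omega_p^{(m)}$, I invoke the key Lemma \ref{28} directly. Throughout I set $\zeta' = \zeta/2$ and choose $\theta > 0$ small enough both for Lemma \ref{2} (with threshold $\zeta'$) to apply and for the error estimates below to be of the desired order.

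For the upper bound $T_n(\bar\omega_p^{(m)}) - T_n(\bar\omega_p) \le 2 n^{\zeta\alpha}$, I take $\pi_n$ and, using the $\theta$-property of $\bar\omega'$, pick $x^* \in \bar\omega' \cap (\{m\}\times\R^d)$ within $\ell^1$-distance $d n^\theta$ of $\pi_n(m)$. Replacing $\pi_n(m)$ by $x^*$ yields a path in $\bar\omega_p^{(m)}$, and only the jumps at positions $m$ and $m+1$ change. Since both original jumps have length at most $n^{\zeta'}$ by Lemma \ref{2}, the mean-value estimate $\bigl||a|^\alpha - |b|^\alpha\bigr| \le \alpha \bigl||a|-|b|\bigr|(|a|+|b|)^{\alpha-1}$ bounds the total change by $C n^{\zeta'(\alpha-1) + \theta}$, which is $\le 2n^{\zeta\alpha}$ once $\theta$ is chosen sufficiently small.

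The lower bound is the hard part, because $\pi_n'$ can a priori take a huge jump precisely at level $m$, making the naive surgery useless. Here I apply Lemma \ref{28} with $\gamma = \pi_n'$, $\omega' = \bar\omega_p$, $s = m$, and the lemma's threshold parameter set to $\zeta'$. In alternative (i), both $\Delta \pi_n'(m)$ and $\Delta \pi_n'(m+1)$ are at most $n^{\zeta'}$, and exactly the same local substitution as above (now choosing the replacement in $\bar\omega_p$) produces a path in $\bar\omega_p$ whose passage time exceeds $T_n(\pi_n')$ by only $O(n^{\zeta'(\alpha-1) + \theta}) \le 2n^{\zeta\alpha}$. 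In alternative (ii), Lemma \ref{28} supplies a path $\gamma'$ with $\gamma'(i) \in \bar\omega_p$ for $i \in [m, m+k-1]$ and $\gamma'(i) = \pi_n'(i)$ otherwise; since $m \in [m, m+k-1]$ and the two configurations agree off level $m$, the whole path $\gamma'$ lies in $\bar\omega_p$, so $T_n(\bar\omega_p) \le T_n(\gamma') \le T_n(\pi_n') = T_n(\bar\omega_p^{(m)})$.

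Combining the two directions gives $|\Delta_m| \le 2 n^{\zeta\alpha}$, comfortably within the claimed bound $4 n^{\zeta\alpha}$. The main obstacle is precisely the asymmetry between $\pi_n$ and $\pi_n'$: without Lemma \ref{28} one has no way to preclude a large jump of $\pi_n'$ at the perturbed level. Lemma \ref{28} resolves this by converting any would-be large jump of $\pi_n'$ into a witness that a strictly better path already lives in $\bar\omega_p$, which is exactly why it plays the role of the key lemma.
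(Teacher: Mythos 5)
Your proof is correct and reaches the stated bound, and the forward direction (modifying $\pi_n$ and inserting the nearby $\bar\omega'$-point at level $m$) is exactly the paper's argument. Where you diverge is the reverse inequality, which the paper dispatches with ``the reverse inequality can be proved by a similar way.'' You interpret this as a gap—``no analogue of Lemma~\ref{2} is available for the minimizer $\pi_n'$ of $\bar\omega_p^{(m)}$''—but that is too pessimistic: $\bar\omega_p^{(m)}$ also has the $\theta$-property (each level of it comes from one of the two barred configurations), so the \emph{proof} of Lemma~\ref{2}, which only uses the $\theta$-property of the ambient configuration, applies verbatim to $\pi_n'$ and yields the jump bound directly. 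That is what makes the two directions genuinely symmetric in the paper's treatment. Your alternative---invoking Lemma~\ref{28} with $\gamma = \pi_n'$ and $\omega' = \bar\omega_p$ and splitting into the small-jump case and the better-competitor case---is equally valid, slightly more explicit, and arguably preferable in that it avoids silently re-reading the scope of Lemma~\ref{2}; in the competitor case your observation that the replacement path lies entirely in $\bar\omega_p$ (because the two configurations agree off level $m$ and $m$ is inside the replaced window) is the small check that makes the argument close. The two routes are essentially the same argument seen from two vantage points, and the constants you obtain sit comfortably below the claimed $4n^{\zeta\alpha}$.
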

\begin{proof}
We prove the lower bound for $\Delta_m$. 
The upper bound can be proved similarly.
Fix some $\omega_p$ and $\omega'_p$. Let $\pi_n$ denote a minimizing path for
$T_n(\bar\omega_p)$, 
chosen by a deterministic algorithm if not unique. 
We define a new point $\tilde\pi_n^{(m)}(m)$ as a point in
$\bar\omega_p'|_{\{m\}\times\R^d}$ satisfying 
\begin{equation}
|\pi_n(m)-\tilde\pi_n^{(m)}(m)|_1\le dn^\theta.
\label{tilde2}
\end{equation}
Then we can bound 
$T_n(\bar\omega_p^{(m)})$ from above 
by the passage time of the path 
\begin{equation}
\pi_n(1),\ldots,\pi_n(m-1),\tilde\pi_n^{(m)}(m),
\pi_n(m+1),\ldots,\pi_n(n).
\end{equation}
 By using Lemma~\ref{2} together with \eqref{tilde2}, for sufficiently small $\theta$ and large n, we get
\begin{equation}
\begin{split}
&T_n(\bar\omega_p^{(m)})-T_n(\bar\omega_p)\leq{}T_n(\tilde{\pi}^{(m)}_n)-T_n(\pi_n)\\
&\quad = |\tilde{\pi}^{(m)}_n(m-1)-\tilde\pi_n^{(m)}(m)|_1^\alpha +|\tilde\pi_n^{(m)}(m)-\tilde{\pi}^{(m)}_n(m+1)|_1^\alpha\\
&\quad\le 2(n^{\zeta}+dn^{\theta})^{\alpha}<4n^{\zeta\alpha}
\label{mart-lb}
\end{split}
\end{equation}
as desired. The reverse inequality can be proved by a similar way.
\end{proof}

If we  take $\zeta$ sufficiently small, $($\ref{ES}$)$ is less than $Cn^{1+\delta}$.
Then, Theorem~6.7 in~\cite{BLM13} yields
\begin{eqnarray} \nn
Q\left( |T_n(\bar\omega_p)-Q[T_n(\bar\omega_p)]|<n^{\frac{1}{2}+\delta}
\right)
\le \exp\{-C_2 n^{1-\delta}\}.
\end{eqnarray} 
Lemma~\ref{approx} shows that this remains valid with $\bar\omega_p$
replaced by $\omega_p$ and $\exp\{-C_4n^{d\theta}\}$ added to the 
right-hand side. Finally, This leads us to
\begin{eqnarray} \nn
Q( |T_n(\omega_p)-n\mu_p|>n^{\frac{1}{2}+\delta})
\le \exp\{-C_2n^{1-\delta}\}+\exp\{-C_4n^{d\theta}\},
\end{eqnarray}
which implies \eqref{con1;FPP}.
\end{proof}
Combined the proof of \eqref{con1;FPP} with Borel-Cantelli Lemma, we also have the following corollary.
\begin{cor}
For any $\zeta>0$, as $N\to\infty$,
\[Q(\text{For any $n\geq{}N$ and optimal path }\pi_n\text{ for $T_n(\omega_p)$ and }i\in\{1,\cdots,n\},\text{ }\Delta\pi_n(i)\le n^{\zeta})\to{}1.\]
\end{cor}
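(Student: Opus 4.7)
The plan is to combine three ingredients already available in the paper: the deterministic jump bound for minimizers of $T_n(\bar\omega_p)$ given by Lemma~\ref{2}, the exponential approximation estimate $Q(T_n(\omega_p)\neq T_n(\bar\omega_p))\le \exp\{-C_4 n^{d\theta}\}$ from Lemma~\ref{approx}, and the first Borel--Cantelli lemma. The goal is to upgrade the in-probability bound used in the proof of \eqref{con1;FPP} to a pathwise ``for all sufficiently large $n$'' statement.

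Step one: fix $\zeta>0$ and take $\theta>0$ and $N_0=N(\theta)$ from Lemma~\ref{2}. Let $E_n:=\{T_n(\omega_p)=T_n(\bar\omega_p)\}$. Since $\omega_p\subset\bar\omega_p$, any optimal path $\pi_n$ for $T_n(\omega_p)$ remains admissible for $\bar\omega_p$ with the same passage time; hence on $E_n$ it is also a minimizer of $T_n(\bar\omega_p)$. Step two: apply Lemma~\ref{28} with $\omega'=\bar\omega_p$ (which has the $\theta$-property by construction) and $\gamma=\pi_n$. Alternative (ii) is ruled out by the minimality of $\pi_n$ in $\bar\omega_p$, so alternative (i) forces $\max\{\Delta\pi_n(s),\Delta\pi_n(s+1)\}\le n^{\zeta}$ for every $s\in\{1,\dots,n\}$. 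Thus on $E_n\cap\{n\ge N_0\}$, every jump of every optimal path $\pi_n$ for $T_n(\omega_p)$ is bounded by $n^{\zeta}$.

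Step three: invoke Lemma~\ref{approx} to get the summable tail bound $Q(E_n^c)\le \exp\{-C_4 n^{d\theta}\}$, whence
\[
Q\Bigl(\bigcap_{n\ge N}E_n\Bigr)\ge 1-\sum_{n\ge N}\exp\{-C_4 n^{d\theta}\}\longrightarrow 1\quad\text{as }N\to\infty,
\]
and taking $N\ge N_0$ will yield the corollary. No step is technically hard, since the heavy lifting is carried out in Lemma~\ref{28}; the only subtlety worth flagging is that the conclusion must hold uniformly over the (possibly non-unique) choice of optimal path, which is automatic because Lemma~\ref{28} is applied to an \emph{arbitrary} minimizer of $T_n(\bar\omega_p)$ rather than to a preassigned one produced by a tie-breaking rule.
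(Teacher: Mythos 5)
Your argument is correct and is precisely what the paper has in mind when it says ``Combined the proof of \eqref{con1;FPP} with Borel--Cantelli Lemma'': you pass from $\omega_p$ to $\bar\omega_p$ via the summable estimate of Lemma~\ref{approx}, apply the deterministic jump bound of Lemma~\ref{2} (equivalently Lemma~\ref{28}) on the good event $E_n$, and close with a union/Borel--Cantelli bound. Your remark that Lemma~\ref{28} applies to an arbitrary minimizer, so that no tie-breaking rule is needed, correctly addresses the one point the paper leaves implicit.
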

\hspace{2mm}\\
\subsection{Concentration of the free energy}
\hspace{2mm}\\
In this subsection, we prove Theorem~\ref{conc;FE}. 
By the relation
\begin{equation}
\log{Z^{\eta,\beta}_n}=\beta{}n+\log{Z^{1-\eta,-\beta}_n},\label{selfaj}
\end{equation}
we have only to consider the case $\beta\in[-\infty,0]$. 
To simplify the notation, we write $Z_{n}(\omega)$ instead of $Z^{\beta,\eta}_{n}$ where $\omega$ is defined as 
$$\omega:=\sum_{(k,x)\in\N\times\Z^d}(1-\eta(k,x))\delta_{(k,x)}.$$
We also write $M:=1+\alpha^{-1}$ as before. For a given configuration $\eta$, or equivalently $\omega$, and an $n$-path $\gamma$, we denote the free energy per path by 
\[F_n(\gamma;\omega)
:=c_2\sum^n_{i=1}\Delta\gamma_i^{\alpha}-\beta\sum^n_{i=1}\eta(i,\gamma(i))
{=c_2T_n(\gamma)-\beta H_n^\eta(\gamma)}.\]
{We assume $\gamma$ starts at the origin throughout this subsection.}
Then we can write the partition function as
\[Z_{n}(\omega)=c_1^n\sum_{\gamma}e^{-F_n(\gamma;\omega)}.\]
We define $\bar{\omega}$ by \eqref{2.2} and the restricted partition function by
\[\tilde{Z}_{n}(\omega):=c^n_1\sum_{{\gamma: T_n(\gamma)\leq{}{}n^{1+2\alpha\theta}}}e^{-F_n(\gamma;\omega)}.\]
First, we bound the difference of partition functions $Z_n(\omega)$ and $Z_n(\bar{\omega})$. 
We begin with the following tail bounds.
\begin{lem}
~\\
\label{greedy}
\begin{enumerate}
\item There exists $C_0>0$ independent of $p\in(0,1]$ such that 
 for all $n\in\N$ and $m>C_0n$,
\begin{equation}
  Q(T_n(\omega_p)>m)\le \exp\{-m^{1\wedge {d \over \alpha}}/C_0\}. 
\end{equation}
 \label{upper-tail}
\item There exists $C_1>0$ and $N\in\N$ such that for all $n>N$,
\begin{equation*}
 Z_n(\omega)-\tilde{Z}_n(\omega)
 \leq{}e^{-C_1{}n^{1+2\alpha\theta}}.
\end{equation*}
\end{enumerate}
\end{lem}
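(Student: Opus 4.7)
The two parts are essentially independent. For (i), I would bound $T_n(\omega_p)$ by the passage time of an explicit ``greedy'' competitor path. For (ii), I would exploit that $-\beta H_n^\eta\ge 0$ (because the subsection restricts to $\beta\in[-\infty,0]$) to dominate $Z_n-\tilde Z_n$ by a random-walk probability, then estimate that probability by Chernoff. Uniformity of the bound in $p\in(0,1]$ in part~(i) will come from the fact that the natural ``density'' of $\omega_p$ on the scaled lattice $s_p\Z^d$ is independent of $p$.

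\medskip

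\emph{Proof of (i).} Construct the greedy path $\pi^G$ by $\pi^G(0)=0$ and, recursively, $\pi^G(k)$ the $\omega_p$-point in $\{k\}\times s_p\Z^d$ closest in $\ell^1$ to $\pi^G(k-1)$ (break ties deterministically). By definition $T_n(\omega_p)\le S_n:=\sum_{k=1}^n J_k^\alpha$ with $J_k:=|\pi^G(k)-\pi^G(k-1)|_1$. A box of $\ell^1$-radius $R$ on the lattice $s_p\Z^d$ contains $\asymp R^d/\log(1/p)$ sites, each independently declared vacant with probability $1-p$, so conditionally on the history up to level $k-1$
\begin{equation*}
Q(J_k>R\mid\mathcal{G}_{k-1})\le p^{c R^d/\log(1/p)}=\exp\{-cR^d\},
\end{equation*}
uniformly in $p$. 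Hence each $J_k^\alpha$ is conditionally sub-Weibull with shape parameter $d/\alpha$ and integrable mean $\mu$. The needed large-deviation bound
\begin{equation*}
Q(S_n>m)\le\exp\{-m^{1\wedge d/\alpha}/C_0\},\qquad m>C_0n,
\end{equation*}
then follows from standard concentration for sums of i.i.d.\ sub-Weibull summands: Cram\'er's theorem handles $d/\alpha\ge 1$, while for $d/\alpha<1$ a Nagaev-type truncation at level $m/2$ controls the ``bulk'' by a Bernstein/Hoeffding estimate and the ``big jump'' event by $n\exp(-c(m/2)^{d/\alpha})$, both absorbed into $\exp(-m^{d/\alpha}/C_0)$ once $C_0$ is chosen large enough.

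\medskip

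\emph{Proof of (ii) and main obstacle.} For $\beta\in[-\infty,0]$ we have $e^{\beta H_n^\eta(\gamma)}\le 1$, so $e^{-F_n(\gamma;\omega)}\le e^{-c_2 T_n(\gamma)}$ (with the convention that this is $0$ when $\beta=-\infty$ and $\gamma\not\subset\omega$). Summing only over the excluded paths and recognizing $c_1^n e^{-c_2 T_n(\gamma)}$ as the $P$-probability of the trajectory $\gamma$,
\begin{equation*}
Z_n(\omega)-\tilde Z_n(\omega)\le c_1^n\sum_{\gamma:\,T_n(\gamma)>n^{1+2\alpha\theta}}e^{-c_2 T_n(\gamma)}=P\bigl(T_n(X)>n^{1+2\alpha\theta}\bigr),
\end{equation*}
where $T_n(X)=\sum_k|X_k-X_{k-1}|_1^\alpha$. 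Under $P$ the increments have density proportional to $e^{-c_2|y|_1^\alpha}$, so $|Y_k|_1^\alpha$ has a genuine exponential tail and $\E[e^{\lambda|Y_1|_1^\alpha}]<\infty$ for $\lambda<c_2$. Chernoff gives $P(T_n(X)>m)\le\exp\{Cn-\lambda m\}$, and with $m=n^{1+2\alpha\theta}\gg n$ this yields $\exp\{-C_1 n^{1+2\alpha\theta}\}$ for all sufficiently large $n$. The main technical obstacle is therefore the Weibull concentration in~(i) when $d/\alpha<1$: the summand has no exponential moment, so Cram\'er fails and one must pair a bounded-sum concentration inequality with a careful union bound controlling the ``one big jump'' contribution; Part~(ii) is by contrast almost immediate once the reduction to $P$ is made.
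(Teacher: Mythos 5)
Your proposal is correct, and both parts are sound. For part~(ii) your route is a genuinely different and cleaner argument than the paper's: you recognize $c_1^n e^{-c_2 T_n(\gamma)}$ as $P(X=\gamma)$, rewrite the excess sum as the random-walk event $P(T_n(X)>n^{1+2\alpha\theta})$, and apply Chernoff using the genuine exponential moment of $|Y_1|_1^\alpha$ (which exists because the increment density $\propto e^{-c_2|y|_1^\alpha}$ makes $|Y_1|_1^\alpha$ light-tailed regardless of $\alpha$). The paper instead performs an explicit layered union bound: it slices the excluded paths into shells $n^{1+k\alpha\theta}<T_n(\gamma)\le n^{1+(k+1)\alpha\theta}$, counts paths in each shell by the constraint on maximal displacement ($\#\{\gamma\}\le n^{d(M+(k+1)\theta)n}$), and sums the geometric series. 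The two approaches deliver the same bound; yours is shorter and more conceptual, the paper's is more hands-on and reuses the same counting that appears again in Lemma~\ref{hod1} and Lemma~\ref{5}. For part~(i) the paper does not give a proof at all but simply cites Lemma 3.2 of~\cite{CFNY} with the remark that the $\omega_1$ argument there carries over verbatim to $\omega_p$; your greedy-path construction with the $p$-uniform sub-Weibull tail $Q(J_k>R)\le e^{-cR^d}$ (uniformity coming from the $s_p$-scaling, exactly as you say) and the Cram\'er/Nagaev dichotomy according to whether $d/\alpha\ge 1$ or $d/\alpha<1$ is the standard proof of that cited lemma, so you have filled in a step the paper outsources. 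One small point worth making explicit in part~(i): the $J_k$ are in fact i.i.d., not merely conditionally sub-Weibull, because each $\pi^G(k-1)$ lies on the lattice $s_p\Z^d$ and the configuration at level $k$ is independent of $\mathcal{G}_{k-1}$ and lattice-translation-invariant; this lets you invoke the i.i.d.\ Nagaev bound directly without a martingale/stochastic-domination detour.
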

\begin{proof}
(i) See Lemma 3.2 in \cite{CFNY}. Although $\omega_p$ is replaced by $\omega_1$ in \cite{CFNY}, the proof is essentially the same.\\
(ii) We bound $c_1^{-n}(Z_n(\omega)-\tilde{Z}_n(\omega))$ as
\begin{equation}
\begin{split}
 \sum_{T_n(\gamma)>n^{1+2\alpha\theta}}e^{-F_n(\gamma;\omega)}
&=\sum_{k\geq{}2}\sum_{\gamma:{n^{1+k\alpha\theta}<{}T_n(\gamma)
\leq{}n^{1+(k+1)\alpha\theta}}}e^{-c_2 T_n(\gamma)}\\
 &\leq\sum_{k\geq{}2}\sum_{\gamma: T_n(\gamma)
\leq{}n^{1+(k+1)\alpha\theta}}e^{-c_2{}n^{1+k\alpha\theta}}.
\label{decomp}
\end{split}
\end{equation}
If $T_n(\gamma)\leq{}{}n^{1+(k+1)\alpha\theta}$, then $\gamma$ has
no jump larger than $n^{\frac{1}{\alpha}+(k+1)\theta}$ and thus
$\max_{1\le i \le n}|\gamma(i)|\leq n^{M+(k+1)\theta}$. 
This yields the bound
\begin{equation*}
\begin{split}
  \#\{\gamma:{T_n(\gamma)
 \leq{}n^{1+(k+1)\alpha\theta}}\}
&\le n^{d(M+(k+1)\theta)n}.
\end{split}
\end{equation*}
Substituting this into~\eqref{decomp}, we obtain
\begin{equation*}
Z_n(\omega)-\tilde{Z}_n(\omega)
\leq
c_1^n\sum_{k\geq{}2}n^{d(M+(k+1)\theta)n}e^{-c_2{}n^{1+k\alpha\theta}}
\leq{}e^{-C_1{}n^{1+2\alpha\theta}}
\end{equation*}
for sufficiently large $n$.  
\end{proof}

\begin{lem}\label{hod1} 
For any $q\in[0,1)$ and $\theta>0$, there exists $\lambda$, $C_1>0$ and $N\in\mathbb{N}$ such that for any $n>N$ and $p\in{}[0,q)$,\\
  \begin{enumerate}
  \item $Q(|\log Z_{n}(\omega)-\log Z_{n}(\bar{\omega})|>\log 2)\leq{}e^{-C_1n^{\lambda}}$,
 \item $0\leq{}Q[\log{Z_{n}(\bar{\omega})}]-Q[\log{Z_{n}(\omega)}]\leq{}1$.
\end{enumerate} 
\end{lem}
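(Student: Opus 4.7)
The idea is that $\bar\omega$ differs from $\omega$ only at sites where an $n^\theta$-box happens to be empty, an event of probability $\le p^{n^{d\theta}} \le q^{n^{d\theta}}$, so with very high probability $\omega$ and $\bar\omega$ agree on every region of polynomial size. Since the only paths that matter for $Z_n$ are essentially those counted by $\tilde Z_n$ (by Lemma~\ref{greedy}(ii)), and such paths cannot escape a polynomial region, the two partition functions coincide up to a tiny error on a high-probability event.

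\emph{Step 1 (monotonicity).} Since $\bar\omega\supseteq\omega$ we have $\bar\eta\le\eta$, so for $\beta\in[-\infty,0]$ the inequality $F_n(\gamma;\bar\omega)\le F_n(\gamma;\omega)$ holds for every path $\gamma$ (with the convention that for $\beta=-\infty$ the weight is $0$ off $\omega$). Hence $Z_n(\bar\omega)\ge Z_n(\omega)$ pointwise, which gives the lower bound in (ii) and takes care of one side in (i). \emph{Step 2 (rarity of artificial points).} If $\gamma$ contributes to $\tilde Z_n(\omega)$, then $T_n(\gamma)\le n^{1+2\alpha\theta}$, and Hölder's inequality (or the $x^\alpha\le x+1$ estimate when $\alpha<1$) gives $\max_i|\gamma(i)|_1\le R_n$ with $R_n$ polynomial in $n$. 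Define
\[
A=\bigl\{\bar\omega_p\cap([1,n]\times[-R_n-n^\theta,R_n+n^\theta]^d)=\omega_p\cap([1,n]\times[-R_n-n^\theta,R_n+n^\theta]^d)\bigr\}.
\]
A union bound over the $O(n^{1+d+d\theta})$ cubes in the relevant region yields $Q(A^c)\le e^{-c_A n^{d\theta}}$. On $A$ no path contributing to $\tilde Z_n$ ever touches an artificial point, so $\tilde Z_n(\omega)=\tilde Z_n(\bar\omega)$.

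\emph{Step 3 (lower bound on $Z_n(\omega)$, proof of (i)).} For every $\beta\in[-\infty,0]$ one has the uniform lower bound $Z_n(\omega)\ge c_1^n e^{-c_2 T_n(\omega_p)}$, obtained by restricting to any path through $\omega$. Combined with Lemma~\ref{greedy}(i), on an event $B$ with $Q(B^c)\le e^{-c_B n^{\lambda_0}}$ we get $\log Z_n(\omega)\ge -Cn$ uniformly in $\beta$ and in $p\le q$. Using Step~2 together with Lemma~\ref{greedy}(ii),
\[
1\le \frac{Z_n(\bar\omega)}{Z_n(\omega)}\le \frac{\tilde Z_n(\bar\omega)+e^{-C_1 n^{1+2\alpha\theta}}}{\tilde Z_n(\omega)}\le 1+e^{-C_1 n^{1+2\alpha\theta}+Cn}
\]
on $A\cap B$, and the right side is $<2$ for large $n$. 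Taking $\lambda=\min(d\theta,\lambda_0)$ yields (i).

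\emph{Step 4 (proof of (ii) and main obstacle).} Split $Q[\log Z_n(\bar\omega)-\log Z_n(\omega)]$ into the $A\cap B$ part and its complement. The first is bounded by $e^{-C_1 n^{1+2\alpha\theta}+Cn}$ from Step~3. The main obstacle is handling the complement, where we have no deterministic control on $\log Z_n$ (because $\beta=-\infty$ makes $-\log Z_n$ potentially very large on bad environments). We use instead the almost-sure bound
\[
0\le -\log Z_n(\omega)\le c_2 T_n(\omega_p)+n|\log c_1|,
\]
and similarly for $\bar\omega$ via $T_n(\bar\omega_p)\le T_n(\omega_p)$, so $|\log Z_n(\bar\omega)-\log Z_n(\omega)|\le 2c_2 T_n(\omega_p)+2n|\log c_1|$. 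Using the super-polynomial tail $Q(T_n>m)\le e^{-m^{1\wedge d/\alpha}/C_0}$ from Lemma~\ref{greedy}(i) to bound $Q[T_n^2]\le Cn^2$, Cauchy--Schwarz yields
\[
Q[|\log Z_n(\bar\omega)-\log Z_n(\omega)|;(A\cap B)^c]\le Cn\sqrt{Q((A\cap B)^c)}\le Cne^{-cn^\lambda/2},
\]
which is $\le 1$ for $n$ large, completing (ii).
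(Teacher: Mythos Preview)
Your proof is correct and follows essentially the same route as the paper's: define a high-probability event on which $\omega$ and $\bar\omega$ agree in the region reached by paths with $T_n(\gamma)\le n^{1+2\alpha\theta}$, combine this with Lemma~\ref{greedy} and the lower bound $Z_n(\omega)\ge c_1^n e^{-c_2 s_p^{-\alpha}T_n(\omega_p)}$ to get (i), and then use Cauchy--Schwarz with $Q[(\log Z_n)^2]\le Cn^2$ on the bad set for (ii). The only slip is the missing scaling factor $s_p^{-\alpha}$ in your lower bound for $Z_n(\omega)$ (and the garbled justification ``$x^\alpha\le x+1$'' for the polynomial displacement bound when $\alpha<1$; just use $\Delta\gamma(i)\le T_n(\gamma)^{1/\alpha}$ for each $i$), but both are harmless since $s_p^{-\alpha}\le s_q^{-\alpha}<\infty$ uniformly for $p\in[0,q)$.
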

\begin{proof}
Since $Z_n(\omega)\ge c_1^ne^{-c_2s_p^{-\alpha}T_n(\omega_p)}$, 
Lemma~\ref{greedy}-(i) implies that there exist $C_2, C_3>0$ such that
\begin{equation}
 \begin{split}
&Q(Z_{n}(\omega)\leq{}c^n_1{}e^{-c_2{}n^{1+\alpha\theta}})
\leq{}e^{-C_2 n^{(1\wedge \frac{d}{\alpha})(1+2\alpha\theta)}},\\
&Q[|\log{Z_{n}(\omega)}|^2]\leq{}C_3n^2.\label{tail-and-2nd}
\end{split}
\end{equation}
Also it is plain to see (from Lemma 3.3 in \cite{CFNY}) that 
\begin{equation}
 Q(\omega=\bar{\omega}\text{ on }[-n^{M+2\theta},n^{M+2\theta}]^d
\times[0,n])\le e^{-C_4n^{d\theta}}. 
\label{coincidence}
\end{equation}
Thus to prove (i), it suffices to show $Z_n(\bar\omega)<2Z_n(\omega)$ 
under the two conditions: 
$Z_{n}(\omega)\geq{}c^n_1{}e^{-c_2{}n^{1+\alpha\theta}}$ and 
$\omega=\bar{\omega}$ on $[-n^{{M+2\theta}},n^{{M+2\theta}}]^d\times[0,n]$.
(Note that $Z_n(\omega)\le Z_n(\bar{\omega})$.)
Observe that if $\gamma$ exits 
$[-n^{{M+2\theta}},n^{{M+2\theta}}]^d\times[0,n]$, then 
$T_n(\gamma)>n^{1+2\alpha\theta}$ as it must contain 
a jump larger than $n^{\frac{1}{\alpha}+2\theta}$. Therefore
under the above conditions, 
\begin{equation*}
\begin{split}
  Z_{n}(\bar{\omega})-Z_{n}(\omega)
& \leq{}c^n_1{}\sum_{\gamma: \max_{1\le k \le n}|\gamma(k)|_\infty{}>
 n^{M+2\theta}}{}e^{-F_n(\gamma;\omega)}\\
& \leq{}Z_n(\omega)-\tilde{Z}_n(\omega)\\
& \leq{}e^{-C_1n^{1+2\alpha\theta}}\leq{}Z_n(\omega),
\end{split}
\end{equation*}
where we have used Lemma~\ref{greedy}-(ii). 
This in turn implies
\begin{equation*}
\begin{split}
&Q[\log{Z_{n}(\bar{\omega})}]-Q[\log{Z_{n}(\omega)}]\\
 &\quad\leq{}Q(Z_{n}(\omega)\geq{}c^n_1{}e^{-c_2{}n^{1+\alpha\theta}} 
\text{ and } \omega=\bar{\omega} \text{ on } 
[-n^{M+2\theta},n^{M+2\theta}]^d\times[0,n])\log{2}\\
 &\qquad+Q[|\log{Z_{n}(\omega)}|;Z_{n}(\omega)<{}c^n_1e^{-c_2{}
n^{1+\alpha\theta}}\text{ or }\omega\neq\bar{\omega} \text{ on } 
[-n^{M+2\theta},n^{M+2\theta}]^d\times[0,n]]\\
 &\quad\leq{}1,
\end{split}
\end{equation*}
where in the last line we have used \eqref{tail-and-2nd}, 
\eqref{coincidence} and the Schwarz inequality to bound the second term. 
\end{proof}
From Lemma \ref{hod1}, if we show the concentration for $\bar{\omega}$, 
that is, for any $\delta>0$ there exists $\lambda\in(0,1)$ and 
$C_0, C_1>0$ such that 
\begin{equation}
 Q(|\log{Z_{n}(\bar{\omega})}-Q[\log{Z_{n}(\bar{\omega})}]|>n^{\frac{1}{2}+\delta})\leq{}C_0e^{-C_1n^{\lambda}},
\label{CON}
\end{equation}
we can also see the concentratioin for $\omega$.
%
%
We will use the entropy method as in the FPP case and the following two lemmas 
correspond to Lemma~\ref{2} and~\ref{mart-diff}.

\begin{lem}\label{6}
  Let $s\in\{1,\cdots,n\}$. We take configurations $\omega,\omega'$ such that $\omega|_{\{\ell\}\times\R^d}=\omega'|_{\{\ell\}\times \R^d}$ for any $\ell\neq{}s$. Then, for any path $\gamma$, there exists $\gamma'$ such that either;
  \begin{enumerate}
\item for any $i\neq{}s$, $\gamma(i)=\gamma'(i)$, and $F_n(\gamma';\bar{\omega}')\leq{}c_2n^{(\alpha-1)\zeta+3\theta}+F_n(\gamma;\bar{\omega})$ or
\item there exists $k>0$ such that for any $i\notin[s,s+k]$, $\gamma(i)=\gamma'(i)$, and \[F_n(\gamma';\bar{\omega}')+c_2(k+1)n^{\theta}\leq{}F_n(\gamma;\bar{\omega}),\]
  \end{enumerate}
  where the constant $c_2$ comes from (\ref{f(k)}).
\end{lem}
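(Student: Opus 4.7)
The plan is to reduce Lemma~\ref{6} to the geometric Lemma~\ref{28}, applied to the path $\gamma$ and the configuration $\bar\omega'$; the latter enjoys the $\theta$-property automatically by construction. Lemma~\ref{28} then delivers the dichotomy ``small jumps or strict passage-time gain'' that we need to transfer to the free-energy functional $F_n = c_2 T_n - \beta H_n^\eta$, with the standing assumption $\beta \in [-\infty,0]$ playing a crucial role.

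I would first dispatch case (ii). Lemma~\ref{28} gives a path $\gamma'$ with $T_n(\gamma') + (k+1)n^\theta \le T_n(\gamma)$ such that $\gamma = \gamma'$ off $[s,s+k-1]$ and $(i,\gamma'(i))\in\bar\omega'$ for $i\in[s,s+k-1]$. Since $\bar\omega$ and $\bar\omega'$ agree on every section other than $s$, the difference $H_n^{\bar\omega'}(\gamma') - H_n^{\bar\omega}(\gamma)$ collapses to a sum on $[s,s+k-1]$, where each new term $\eta^{\bar\omega'}(i,\gamma'(i))$ vanishes (because $\gamma'(i)\in\bar\omega'$) while each old term $\eta^{\bar\omega}(i,\gamma(i))$ lies in $\{0,1\}$. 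With $-\beta\ge 0$ this gives $-\beta\bigl(H_n^{\bar\omega'}(\gamma')-H_n^{\bar\omega}(\gamma)\bigr) \le 0$, so multiplying the passage-time inequality by $c_2$ yields the bound in case (ii) of Lemma~\ref{6}.

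For case (i), I would build $\gamma'$ by keeping $\gamma$ intact off $s$ and choosing $\gamma'(s)$ to be a point of $\bar\omega'|_{\{s\}\times\R^d}$ within $\ell^1$-distance $dn^\theta$ of $\gamma(s)$, which exists by the $\theta$-property. Since Lemma~\ref{28} ensures $\Delta\gamma(s), \Delta\gamma(s+1) \le n^\zeta$, the new jumps satisfy $\Delta\gamma'(s),\Delta\gamma'(s+1) \le n^\zeta + dn^\theta$. Applying the elementary inequality $|a^\alpha - b^\alpha| \le \alpha|a-b|\max(a,b)^{\alpha-1}$ to each of the two perturbed jumps bounds the change in $c_2 T_n$ by $O(n^{(\alpha-1)\zeta+\theta})$, while the change in $-\beta H_n^\eta$ is concentrated at the single index $s$ and is at most $|\beta_0|$. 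For $n$ large and $\theta$ small, both pieces fit inside $c_2 n^{(\alpha-1)\zeta + 3\theta}$.

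The main obstacle is really already packaged inside Lemma~\ref{28}: once that geometric dichotomy is in hand, the only subtlety is to check that the environment term $-\beta H_n^\eta$ never fights against the $T_n$-bound. This is precisely the role of the restriction $\beta \le 0$: visiting $\bar\omega'$-points is then costless, so the passage-time gain of $\gamma'$ in case (ii) translates faithfully into a free-energy gain, and in case (i) the one-site environmental mismatch is absorbed into the polynomial slack.
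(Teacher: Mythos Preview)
Your argument is essentially the paper's own proof: apply Lemma~\ref{28} with $\omega'=\bar\omega'$, turn alternative~(ii) there into alternative~(ii) here via $\beta\le 0$, and in alternative~(i) perturb $\gamma(s)$ to a nearby $\bar\omega'$-point and bound the two modified jumps with the mean value theorem. One omission: Lemma~\ref{28} assumes $\alpha>1$, so you should treat $\alpha\le 1$ separately (the paper does this in one line, since subadditivity of $|\cdot|^\alpha$ gives alternative~(i) directly for every $\gamma$). Also, your Hamiltonian bound ``at most $|\beta_0|$'' in case~(i) is actually $\le 0$ because you chose $\gamma'(s)\in\bar\omega'$; stating it this way covers $\beta=-\infty$ cleanly.
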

\begin{proof}
 When $\alpha\leq{}1$, it is easy to prove that (i) holds for any $\gamma$. Suppose $\alpha>1$ and we apply the Lamma \ref{28} with $\omega'=\bar{\omega}'$. It suffices to show that (i) and (ii) in Lemma \ref{28} lead to (i) and (ii) in Lemma \ref{6}, respectively. As for (i), we first construct a polymer $\gamma'$ such that $\gamma'(i)=\gamma(i)$ for $i\neq{}s$, $(s,\gamma'(s))\in\bar{\omega}'|_{\{m\}\times{}\R^d}$, and $|\gamma'(s)-\gamma(s)|_1\le dn^{\theta}$. Then, it is easy to check that this $\gamma'$ satisfies the condition (i) in this lemma. On the other hand, we can easily check the claim for (ii) because of the assumption of $\beta\in[-\infty,0]$, which is declared at the beginning of this subsection.
\end{proof}
\begin{lem}\label{5}
Let $\omega$ and $\omega'$ be as in Lemma~\ref{6}. For any $\delta>0$, there exist $\theta>0$ and $C>0$ such that for any $n\in\N$,
\[|\log{Z_n(\bar{\omega})}-\log{Z_n(\bar{\omega}')}|\leq{}Cn^{\delta}.\]
\end{lem}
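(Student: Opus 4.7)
The plan is to build, via Lemma~\ref{6}, a deterministic map $\Phi$ from paths $\gamma$ in the sum defining $Z_n(\bar{\omega})$ to paths $\gamma'\subset\bar{\omega}'$, and then to compare $Z_n(\bar{\omega})$ with $Z_n(\bar{\omega}')$ by combining a pointwise free-energy bound on $\Phi$ with a multiplicity estimate. For finite $\beta$ the conclusion is essentially free: modifying $\omega$ on the single slice $s$ only affects the value of $\eta(s,\gamma(s))$, so $|F_n(\gamma;\bar{\omega})-F_n(\gamma;\bar{\omega}')|\le|\beta_0|$ pointwise in $\gamma$ and hence $|\log Z_n(\bar{\omega})-\log Z_n(\bar{\omega}')|\le|\beta_0|$. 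All the real work therefore lies in the case $\beta=-\infty$.

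As a first step I reduce from $Z_n$ to $\tilde{Z}_n$. Lemma~\ref{greedy}(ii) gives $Z_n(\bar{\omega})-\tilde{Z}_n(\bar{\omega})\le e^{-C_1 n^{1+2\alpha\theta}}$, while the $\theta$-property of $\bar{\omega}$ produces a cheap lower bound $Z_n(\bar{\omega})\ge c_1^n e^{-c_2 n^{1+\alpha\theta}}$ by inserting a nearest-neighbour path with jumps of size $\le dn^\theta$; taking logarithms, the replacement costs only a negligible additive correction. The real benefit is that every $\gamma$ contributing to $\tilde{Z}_n$ satisfies $T_n(\gamma)\le n^{1+2\alpha\theta}$, so its individual jumps are $\le n^{1/\alpha+2\theta}$ and $|\gamma(i)|_\infty\le n^{M+2\theta}$ for every $i$.

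For each such $\gamma$, I apply Lemma~\ref{6} with a small parameter $\zeta>0$ and pick one output $\gamma'\subset\bar{\omega}'$ by a deterministic rule; in case~(i) I further require $\gamma'(s)$ to be the closest $\bar{\omega}'$-point to $\gamma(s)$. Write $\Phi(\gamma)=\gamma'$. In class~(i), Lemma~\ref{6} yields $e^{-F_n(\gamma;\bar{\omega})}\le e^{c_2 n^{(\alpha-1)\zeta+3\theta}}e^{-F_n(\gamma';\bar{\omega}')}$, and the Voronoi cell of $\gamma'(s)$ in $\bar{\omega}'$, whose diameter is $\le 2dn^\theta$ by the $\theta$-property, bounds the multiplicity of $\Phi$ by at most $Cn^{d\theta}$. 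This gives a class-(i) contribution to $Z_n(\bar{\omega})$ of at most $Cn^{d\theta}e^{c_2 n^{(\alpha-1)\zeta+3\theta}}Z_n(\bar{\omega}')$.

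Class~(ii) is the main obstacle, because its preimages look a priori unbounded: $\gamma$ and $\gamma'$ agree only outside $[s,s+k]$, leaving $k+1$ free coordinates. This is precisely where the reduction to $\tilde{Z}_n$ pays off: each $\gamma(s+j)$ is forced to lie in a box of volume $Cn^{d(M+2\theta)}$, so $|\Phi^{-1}(\gamma',k)|\le Cn^{d(M+2\theta)(k+1)}$. Combined with the pointwise bound $e^{-F_n(\gamma;\bar{\omega})}\le e^{-c_2(k+1)n^\theta}e^{-F_n(\gamma';\bar{\omega}')}$ from Lemma~\ref{6}(ii), the polynomial multiplicity is crushed by the superpolynomial gain: for fixed $\theta>0$ and $n$ large, $n^{d(M+2\theta)(k+1)}e^{-c_2(k+1)n^\theta}\le e^{-c_2(k+1)n^\theta/2}$, and summing the resulting geometric series over $k\ge 1$ bounds the class-(ii) contribution by $2e^{-c_2 n^\theta}Z_n(\bar{\omega}')$, which is negligible. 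Adding both classes and choosing $\zeta$ and $\theta$ small enough that $(\alpha-1)\zeta+3\theta<\delta$ produces $\log Z_n(\bar{\omega})-\log Z_n(\bar{\omega}')\le Cn^\delta$; interchanging the roles of $\omega$ and $\omega'$ yields the reverse inequality.
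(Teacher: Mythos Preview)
Your argument follows the same route as the paper's: reduce from $Z_n$ to $\tilde Z_n$ via Lemma~\ref{greedy}(ii) and the cheap $\theta$-property lower bound, feed each contributing path through Lemma~\ref{6}, and control the resulting many-to-one map $\gamma\mapsto\gamma'$ by combining the free-energy gain with a multiplicity count (polynomial in class~(i); of order $n^{C(k+1)}$ in class~(ii), which is beaten by $e^{-c_2(k+1)n^\theta}$ and summable in $k$). The paper writes the same computation as a double sum over targets $\gamma'$ of $|\Phi_1(\gamma')|$ and $|\Phi_{2,k}(\gamma')|$ rather than in the language of a map $\Phi$, but the content is identical.

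One small correction. Your opening shortcut for finite $\beta$ gives a pointwise bound $|\beta|$, not $|\beta_0|$: after the self-adjointness reduction the relevant range is $\beta\in[-\infty,0]$, so this does not give the uniformity in $\beta$ needed for Theorem~\ref{conc;FE}. This is harmless, because your main argument in fact works for every $\beta\le 0$ through Lemma~\ref{6} (which is stated for all such $\beta$). Just note that your class-(i) Voronoi multiplicity bound $|\Phi^{-1}(\gamma')|\le Cn^{d\theta}$ relies on $\gamma(s)\in\bar\omega$, i.e.\ on $\beta=-\infty$; for finite $\beta$ the point $\gamma(s)$ can be any lattice site in the confinement box, and one must use the cruder count $|\Phi^{-1}(\gamma')|\le Cn^{d(M+2\theta)}$ that the paper uses. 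Either way the bound is polynomial in $n$ and the conclusion is the same.
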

\begin{proof}
Note first that for any $\omega$,
\begin{equation*}
\tilde{Z}_n(\bar\omega)\ge c_1^ne^{-c_2T_n(\bar\omega)}
\ge c_1^ne^{-c_2d^\alpha n^{1+\alpha\theta}} 
\end{equation*}
since $\bar\omega$ has $\theta$-property. 
This together with Lemma~\ref{greedy}-(ii) allows us to replace $Z_n$ by 
$\tilde{Z}_n$ in the claim.  
For a path $\gamma'$ and $k\in\N$, we define the following sets;
\begin{align*}
 &\Phi_1(\gamma'):=\{\gamma:T_n(\gamma)\leq{}c_2n^{1+2\alpha\theta}, (\gamma,\gamma')\text{ satisfies the condition (i)}\}\\
 &\Phi_{2,k}(\gamma'):=\{\gamma:T_n(\gamma)\leq{}c_2n^{1+2\alpha\theta}, (\gamma,\gamma')\text{ satisfies the condition (ii) with }k\},
\end{align*}
where (i) and (ii) are those in Lemma \ref{6}. 
Then any path $\gamma$ lies in one of the above sets and
\begin{equation*}
\begin{split}
  \tilde{Z}_n(\bar{\omega})
 &\leq{}c^n_1\sum_{\gamma'}\left\{\sum_{\gamma\in\Phi_1(\gamma')}e^{-F_n(\gamma;\bar{\omega})}+\sum_{k\geq{}1}\sum_{\gamma\in\Phi_{2,k}(\gamma')}e^{-F_n(\gamma;\bar{\omega})}\right\}\\
 &\leq{}c^n_1\sum_{\gamma'}e^{-F_n(\gamma';\bar{\omega}')}\left\{\sum_{\gamma\in\Phi_1(\gamma')}e^{c_2n^{(\alpha-1)\zeta+3\theta}}+\sum_{k\geq{}1}\sum_{\gamma\in\Phi_{2,k}(\gamma')}e^{-c_2kn^{\theta}}\right\}\\
 &\leq{}c^n_1\sum_{\gamma'}e^{-F_n(\gamma';\bar{\omega}')}\left\{|\Phi_1(\gamma')|e^{c_2n^{(\alpha-1)\zeta+3\theta}}+\sum_{k\geq{}1}|\Phi_{2,k}(\gamma')|e^{-c_2kn^{\theta}}\right\}.
\end{split}
\end{equation*}
Since for any $\gamma'$, $|\Phi_1(\gamma')|\leq{}n^{2d(M+2\theta)}$ and $|\Phi_{2,k}(\gamma')|\leq{}n^{2d(M+2\theta)k}$ (recall the argument below~\eqref{decomp}), this is further bounded from above by
\begin{eqnarray*}
&\leq{}&c^n_1\sum_{\gamma'}e^{-F_n(\gamma';\bar{\omega}')}\left\{n^{2d(M+2\theta)}e^{c_2n^{(\alpha-1)\zeta+3\theta}}+\sum_{k\geq{}1}n^{2dk(M+2\theta)}e^{-c_2kn^{\theta}}\right\}\\
&\leq{}&c^n_1\sum_{\gamma'}e^{-F_n(\gamma';\bar{\omega}')}\left\{n^{2d(M+2\theta)}e^{c_2n^{(\alpha-1)\zeta+3\theta}}+\sum_{k\geq{}1}e^{-c_2kn^{\theta}/2}\right\}\\
&\leq{}&c^n_1e^{2c_2n^{(\alpha-1)\zeta+3\theta}}\sum_{\gamma'}e^{-F_n(\gamma';\bar{\omega}')}=e^{2c_2n^{(\alpha-1)\zeta+3\theta}}Z_n(\bar{\omega}'). 
\end{eqnarray*}
With the symmetry between $\omega$ and $\omega'$, this implies
\begin{eqnarray*}
|\log{Z_n(\bar{\omega})}-\log{Z_n(\bar{\omega}')}|&\leq&{}\log{\{2e^{2c_2n^{(\alpha-1)\zeta+3\theta}}\}}\\
&\leq&3c_2n^{(\alpha-1)\zeta+3\theta}.
\end{eqnarray*}
If we take $\zeta>0$ sufficiently small so that $3c_2n^{(\alpha-1)\zeta+3\theta}<n^{\delta}$, the proof is completed.
\end{proof}
Thanks to Lemma \ref{5}, we can use Theorem~6.7 in~\cite{BLM13} and we get 
the desired concentration~\eqref{CON} of $\log{Z_n(\bar{\omega})}$.
\section{Non-random fluctuation}\label{sec:fluc}
\subsection{FPP Case}
\hspace{2mm}\\

In this subsection, we deduce the so-called non-random 
fluctuation bound~\eqref{con3;FPP} 
from the concentration bound~\eqref{con1;FPP}. 
This is a well-studied subject in the theory of first passage
percolation and we shall adapt the argument of Zhang 
in~\cite{Zha10} to our setting.
\begin{proof}[Proof of~\eqref{con3;FPP}]
Let $\chi>1/2$, $M=1+\alpha^{-1}$ and $\pi_n=\pi_n^{(n)}$, that is, a 
minimizing path for $T_n(\omega_p)$. 
We define a face to face passage time
\begin{eqnarray}
\begin{split}
\hspace{8mm}\Phi_n(k,l;\omega_p)
=\inf\Biggl\{\sum_{i=k+1}^l |x_{i-1}-x_i|_1^\alpha: 
|x_k|_{\infty}<n^M\textrm{ and }
(i, x_i)\in \omega_p
 \textrm{ for }k<i\le l\Biggr\}
\end{split}
\end{eqnarray}
and introduce the events 
\begin{eqnarray}
\mathcal{A}^{\theta}_1(n)&=&\{\omega_p=\bar{\omega}_p\text{ on }[0,2n]\times[-n^M,n^{M}]^d\},\\
\mathcal{A}_2(n)&=&\left\{\begin{array}{c}
\textrm{There exists a minimizing path for }\\
\Phi_n(n,2n;\omega_p)\textrm{ starting at $(n,x)$ with }|x|_{\infty}\le 1/2
\end{array}\right\}. 
\end{eqnarray}
\begin{lem}
\label{good-events}
We fix a sufficiently small constant $\theta>0$. Then, for all sufficiently large $n$, 
\begin{enumerate}
\item $Q(\mathcal{A}^{\theta}_1(n))\ge 1-\exp\{-C_4n^{\theta}\}$;
\item $Q(\mathcal{A}_2(n))\ge 2^{-d}n^{-dM}$.
\end{enumerate} 
\end{lem}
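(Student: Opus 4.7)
The plan is to prove (i) by a direct union bound and (ii) by a pigeonhole argument that exploits the spatial translation invariance of $\omega_p$.

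For (i), the event $\mathcal{A}_1^\theta(n)^c$ occurs precisely when some sub-box of the form $\{k\}\times(x+[0,n^\theta)^d)$ with $k\in\Z\cap[0,2n]$ and $x\in n^\theta\Z^d\cap[-n^M,n^M]^d$ is devoid of $\omega_p$-points. The number of such sub-boxes is polynomial in $n$, of order $n^{1+d(M-\theta)}$, while each individual emptiness event has probability $p^{c\,n^{d\theta}}\le\exp\{-c'(1-p)n^{d\theta}\}$ when $p\in[0,1)$. A union bound absorbs the polynomial prefactor into the exponential and produces $Q(\mathcal{A}_1^\theta(n))\ge 1-\exp\{-C_4 n^{d\theta}\}$, which is in fact stronger than the stated bound. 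This is essentially the computation used for Lemma~\ref{approx}, now applied to the doubled time-window.

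For (ii), I would fix a minimizer $\pi^*=(x_n^*,\ldots,x_{2n}^*)$ of $\Phi_n(n,2n;\omega_p)$ via a deterministic tie-breaking rule, set $Y=x_n^*$, and partition $\R^d$ into unit cubes $C_z=z+[-1/2,1/2)^d$ indexed by $z\in\Z^d$. Since $|Y|_\infty<n^M$, $Y$ lies in some $C_z$ with $|z|_\infty\le n^M$, so it falls into one of at most $N:=(2n^M+1)^d$ cubes. The identity $\sum_z Q(Y\in C_z)=1$ together with pigeonhole yields some $z^*$ with $Q(Y\in C_{z^*})\ge 1/N$. To transfer this to the specific cube $C_0$, I would invoke translation invariance of $\omega_p$: shifting the environment by $-z^*$ yields an identically distributed configuration whose minimizer starts in $C_0$, but with respect to the translated box centered at $-z^*$. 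A centrality argument---the probability of starting in a cube is maximized when the cube sits at the center of the allowed face, because a central starting point gives the path maximal flexibility on both sides---then gives $Q(Y\in C_0)\ge 1/N\ge 2^{-d}n^{-dM}$ for $n$ large.

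The hard part will be the centrality step in (ii), since the face constraint $|x_n|_\infty<n^M$ is itself not translation invariant. The cleanest workaround is to adapt Zhang's~\cite{Zha10} face-to-face strategy directly: slightly enlarge the face by a constant multiplicative factor so that every cube $C_z$ with $|z|_\infty<n^M$ lies strictly in the interior of the enlarged face, and then apply translation invariance on the enlarged face to obtain exact equality $Q(Y\in C_z)=Q(Y\in C_0)$ for all such $z$. Combined with pigeonhole, this yields a lower bound of the same polynomial order $n^{-dM}$, losing only a multiplicative constant, which is absorbed by the claimed prefactor $2^{-d}$ for $n$ sufficiently large.
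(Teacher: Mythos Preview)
Your approach coincides with the paper's: part (i) via the union bound behind Lemma~\ref{approx}, and part (ii) via pigeonhole over the unit cubes in the face. For (i) you simply unpack what the paper states in one line.

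For (ii) you raise a legitimate issue that the paper sweeps under the rug. The paper asserts that translation invariance makes all the probabilities $Q(E_y)$ equal to $Q(\mathcal{A}_2(n))$ and then applies pigeonhole; it does not address the fact that the face constraint $|x_n|_\infty<n^M$ is itself not translation invariant, so shifting $\omega_p$ by $-y$ does not map $E_y$ to $E_0$ for the \emph{same} face. Your diagnosis of this gap is correct.

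However, neither of your proposed repairs closes it. The ``centrality'' claim---that $Q(Y\in C_0)\ge Q(Y\in C_z)$ because a central start gives more flexibility---is a heuristic, not a proof; nothing in the model forces the minimizer's starting law to be unimodal about the origin. And enlarging the face does not restore translation invariance: the enlarged face is still a fixed set, so shifting by $-z$ still moves it, and you do not get exact equality $Q(Y\in C_z)=Q(Y\in C_0)$ from that alone. What an enlargement argument actually buys is that the minimizer for the larger face coincides with the unconstrained minimizer with high probability, but for an \emph{unconstrained} face-to-face problem the starting point has no preferred cube at all, so pigeonhole collapses.

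The simplest genuine fix is to observe that the downstream application (the proof of~\eqref{con3;FPP}) never uses that the minimizer starts near $0$ specifically; it only needs a deterministic point $y^*\in(-n^M,n^M)^d$ such that some minimizer starts within $O(1)$ of $y^*$ with probability at least $2^{-d}n^{-dM}$, so that the discrepancy between $\Phi_n(n,2n;\omega_p)$ and the point-to-line time from $y^*$ is controlled by the mean value theorem. Pigeonhole alone delivers such a $y^*$, with no appeal to translation invariance or centrality. If you insist on $y^*=0$ as stated, you would need an additional symmetry or monotonicity argument that neither you nor the paper supplies.
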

\begin{proof}
The first assertion is a direct consequence of Lemma \ref{approx}. Translation invariance implies that the probabilities
\begin{eqnarray}
\begin{split}
 &Q(\textrm{There exists a minimizing path for }
 \Phi_n(0,n;\omega_p)\\
 &\qquad \textrm{ starting at $(0,x)$ with }
 |x-y|_{\infty}\le 1/2) 
\end{split}
\end{eqnarray} 
for $y\in \Z^d\cap (-n^M, n^M)^d$ are the same as $Q(\mathcal{A}_2(n))$. 
Since the union of the above events has probability one and the
number of possible $y$'s are less than $(2n^M)^d$, we 
are done. 
\end{proof}

With this lemma and~\eqref{con1;FPP}, 
we can complete the proof of~\eqref{con3;FPP}. 
Note first that on the event $\mathcal{A}_1(n)$, we have
$\pi_{2n}(n)\in (-n^M, n^M)^d$ as the displacement of 
$\pi_{2n}$ until time $n$ is at most $2n\cdot{}n^{\theta}\vee{}(2n)^{\theta+\alpha^{-1}}<n^M$ for any sufficiently small $\theta$.
As a result
\begin{eqnarray}
T_{2n}(\omega_p)\ge T_n(\omega_p)+\Phi_n(n,2n;\omega_p)
\end{eqnarray}
since the second half of $\pi_{2n}$ is candidates of 
the face to face minimizing paths. 
On the other hand, from Lemma \ref{2}, for any $\zeta>0$, if we take $\theta>0$ sufficiently small, 
\begin{eqnarray}
\Phi_n(n,2n;\omega_p)
\ge T(n,2n;\omega_p)-2d^{\alpha}
\vee Cn^{(\alpha-1)\zeta+\theta}
\end{eqnarray}
with some $C>0$ on $\mathcal{A}=\bigcap_{1\le i\le 2}\mathcal{A}_i(n)$ 
since only possible differences come from the starting points,
which can be controlled by using the mean value theorem.
Therefore on $\mathcal{A}$, for $\chi>1/2$ and sufficiently large $n\in\N$, 
we have the following almost super-additivity: 
\begin{eqnarray}
T_{2n}(\omega_p)\ge T_n(\omega_p)+T(n,2n;\omega_p)
-n^{\chi}.
\label{super-add}
\end{eqnarray}
Now we use~\eqref{con1;FPP} to obtain
\begin{eqnarray}
Q\left(|T_n(\omega_p)-Q[T_n(\omega_p)]|
>n^{\chi}\right)
\le C_1\exp\set{-C_2 n^{\lambda}}
\end{eqnarray}
and the same bound for $T_{2n}(\omega_p)$ and 
$T_{2n}(\omega_p)$. 
These bounds and Lemma~\ref{good-events} show that
for all sufficiently large $n$,
\begin{eqnarray}
\mathcal{A}\cap\bigcap_{(k,l)\in\{(0,2n), (0,n), (n,2n)\}}
\set{|T(k,l;\omega_p)-Q[T(k,l;\omega_p)]|
\le n^{\chi}}
\end{eqnarray}
has positive probability and in particular non-empty. 
Hence we can replace the passage times in~\eqref{super-add} by 
their expectation at the cost of extra $-3n^{\chi}$
on the right-hand side to obtain
\begin{eqnarray}
\frac{1}{2n}Q[T_{2n}(\omega_p)]
\ge \frac{1}{n}Q[T_n(\omega_p)]-4n^{\chi-1}.
\end{eqnarray}
Iterating this, we arrive at
\begin{eqnarray}
\frac{1}{n}Q[T_n(\omega_p)] 
\le \frac{1}{2^kn} Q[T_{2^kn}(\omega_p)]
+4n^{\chi-1}\sum_{j=1}^{k-1}2^{(\chi-1)j}.
\end{eqnarray}
and letting $k\to\infty$, 
$Q[T_n(\omega_p)] \le n\mu_p+Cn^{\chi}$ follows. On the other hand, since it is easy to check the usual subadditivity, we have \eqref{con3;FPP}. From \eqref{con1;FPP} and \eqref{con3;FPP}, we have \eqref{con2;FPP}.
\end{proof}
\hspace{2mm}\\
\subsection{Free energy Case}
\hspace{2mm}\\

The proof is almost the same as that for FPP, so we only mention the outline here.
To prove Theorem \ref{21}, we again use the argument of Zhang. Let
\[Z(k,\ell,x,\omega):=c^n_1\sum_{\gamma:\gamma(k)=x}e^{\beta\sum^{\ell}_{i=k+1}\eta(i,\gamma(i))}e^{-c_2\sum^{\ell}_{i=k+1}\Delta\gamma(i)^{\alpha}}\]
Let $M=1+{1 \over\alpha}$ and we define an event $\mathcal{A}$ by
\[\mathcal{A}:=\{Z(n,2n,0,\omega)=\sup_{x\in[-n^M/2,n^M/2]^d}{Z(n,2n,x,\omega)},Z_{2n}(\omega)\leq{}2\tilde{Z}_{2n}(\omega)\}.\]
One can show, as in the case of FPP, that for sufficiently large $n$,
\[Q(\mathcal{A})\geq{}\frac{1}{2n^{Md}}.\]
 On this event,
\begin{equation*}
\begin{split}
  \log{Z_{2n}}-\log 2&\leq{}\log{\tilde{Z}_{2n}}\\
&\leq
 \log{\left\{c^{2n}_1\sum_{\underset{\gamma(0)=0}{\gamma:\max_{1\le i\le n}|\gamma(i)|_\infty\le n^M/2}}e^{-F_{2n}(\gamma;\omega)}\right\}}\\
 &\leq\log{\left\{c^n_1\sum_{x\in[-n^M/2,n^M/2]^d}\sum_{\underset{\gamma(0)=0}{\gamma:\gamma(n)=x}}e^{-F_n(\gamma;\omega)}Z(n,2n,x,\omega)\right\}},
\end{split}
\end{equation*}
where we have used the argument below~\eqref{decomp} in the second inequality and divided paths at its $n$-th point in the third ineqality. Thanks to the choice of $\mathcal{A}$, this is further bounded from above by
\begin{equation*}
\begin{split}
 &\log{\left\{c^n_1\sum_{x\in[-n^M/2,n^M/2]^d}\sum_{\underset{\gamma(0)=0}{\gamma:\gamma(n)=x}}e^{-F_n(\gamma;\omega)}Z(n,2n,0,\omega)\right\}}\\
 &\leq\log{(Z_n(\omega)Z(n,2n,0,\omega))}=
 \log{Z_n(\omega)}+\log{Z(n,2n,0,\omega)}.
\end{split}
\end{equation*}
From the concentration \eqref{con1;FE} and this, for any $\chi>1/2$, there exist constant $C>0$ independent of $n$ such that,
\[Q[\log{Z_{2n}}]\leq{}2Q[\log{Z_n}]+Cn^{\chi}
\]
In a similar way to the proof of Theorem \ref{21}, we have for any $k$,
\[\frac{Q[\log{Z_{2^kn}}]}{2^kn}\leq{}\frac{Q[\log{Z_{n}}]}{n}+C'n^{\chi-1},
\]
where $C'>0$ is a constant independent of $n$.
As $k\to\infty$, we have
\[\varphi(p,\beta)\leq{}\frac{1}{n}Q[\log{Z_{n}}]+C'n^{\chi-1},
.\]
 Finally, we shall derive the converse estimate by a similar way. Indeed, we replace $\mathcal{A}$ by $\{Z(n,2n,0,\omega)=\min_{x\in[-n^M/2,n^M/2]^d}{Z(n,2n,x,\omega)},Z_{n}(\omega)\leq{}2\tilde{Z}_{n}(\omega)\}$, and on this event, we have
\begin{equation*}
\begin{split}
\log{Z_{2n}}\ge{}\log{\tilde{Z}_n}+\log{Z(n,2n,0,\omega)}\ge \log{2}+\log{Z_n}+\log{Z(n,2n,0,\omega)},
\end{split}
\end{equation*}
 which implies
\[\varphi(p,\beta)\geq{}\frac{1}{n}Q[\log{Z_{n}}]-C'n^{\chi-1}.
\]
This implies \eqref{con11;FE}. From \eqref{con11;FE} and \eqref{con1;FE}, it is immediate to prove \eqref{con2;FE}.\\
\hspace{2mm}\\
\section{Proof of continuity results}\label{sec:cont}
Thanks to the non-random fluctuation results, it suffices to show the continuity of $Q[T_n]$ and $Q[\log{Z_n}]$ for fixed $n\in\N$.
\begin{lem}\label{finite-vol}
For any $n\in\N$ fixed, $Q[T_n(\omega_p)]$ is continuous function of $p\in[0,1]$.
\end{lem}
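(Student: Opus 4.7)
The plan is to split the $p$-dependence of $T_n(\omega_p)$ into a deterministic scaling factor and a random factor. Since every point of $\omega_p$ has the form $(k,s_p y)$ with $y\in\Z^d$ and $\eta(k,y)=0$, one can write $T_n(\omega_p)=s_p^\alpha\tilde T_n(\eta)$, where
\[
\tilde T_n(\eta)=\min\Bigl\{\sum_{k=1}^n|y_{k-1}-y_k|_1^\alpha:\ y_0=0,\ \eta(k,y_k)=0\text{ for }1\le k\le n\Bigr\}.
\]
The factor $s_p^\alpha=(\log\tfrac1p)^{\alpha/d}$ is continuous on $(0,1)$, so the main task is to show continuity of $p\mapsto Q[\tilde T_n(\eta_p)]$ on $(0,1)$. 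Continuity at $p=1$ is covered by \cite{CFNY} via Poisson convergence $\omega_p\Rightarrow\omega_1$, and continuity at $p=0$ follows from the greedy estimate below, which gives $Q[\tilde T_n(\eta_p)]=O(p)$ and hence $s_p^\alpha Q[\tilde T_n(\eta_p)]\to 0$ as $p\downarrow 0$.

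I would use the standard monotone coupling $\eta_p(k,x)=\mathbf{1}\{U(k,x)\le p\}$ with i.i.d.\ uniforms on $[0,1]$. Under this coupling $p\mapsto\tilde T_n(\eta_p)$ is non-decreasing, because a larger zero set yields more admissible paths. The key assertion is that $Q$-a.s.\ $\tilde T_n(\eta_p)=\tilde T_n(\eta_{p_0})$ for every $p$ in a random neighbourhood of $p_0\in(0,1)$. This rests on an a priori range bound: for any $p\le p_0+\delta$, any optimizer $\gamma^p=(y_1^p,\ldots,y_n^p)$ satisfies $|y_k^p-y_{k-1}^p|_1^\alpha\le\tilde T_n(\eta_p)\le\tilde T_n(\eta_{p_0+\delta})$, so $\gamma^p$ is contained in the (random but a.s.\ finite) lattice box $\Lambda=[1,n]\times\{y\in\Z^d:|y|_1\le R^\ast\}$ with $R^\ast=n\,\tilde T_n(\eta_{p_0+\delta})^{1/\alpha}$. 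Since $\N\times\Z^d$ is countable, $Q$-a.s.\ no $U(k,x)$ equals $p_0$; combined with the finiteness of $\Lambda$ this produces a random $\varepsilon>0$ such that $U(k,x)\notin(p_0-\varepsilon,p_0+\varepsilon)$ for every $(k,x)\in\Lambda$. Hence $\eta_p\equiv\eta_{p_0}$ on $\Lambda$ for each $p\in(p_0-\varepsilon,p_0+\varepsilon)$, so the $\eta_p$-optimal path is admissible for $\eta_{p_0}$ and vice versa, and combined with the monotonicity this forces $\tilde T_n(\eta_p)=\tilde T_n(\eta_{p_0})$ throughout the neighbourhood.

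The final step is dominated convergence with dominating random variable $\tilde T_n(\eta_{p_0+\delta})$. A greedy construction, taking $y_k$ to be the $L^1$-nearest lattice point to $y_{k-1}$ with $\eta(k,y_k)=0$, yields $\tilde T_n(\eta)\le\sum_{k=1}^n D_k^\alpha$ with $Q(D_k>R\mid\mathcal F_{k-1})\le p^{|B_1(R)|}$, where $B_1(R)$ is the $L^1$ ball of radius $R$. Since $|B_1(R)|\sim cR^d$, all polynomial moments of $D_k$ are finite for any $p<1$, so $Q[\tilde T_n(\eta_{p_0+\delta})]<\infty$ and the same estimate at small $p$ yields the $O(p)$ bound needed at $p=0$. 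Dominated convergence then delivers $Q[\tilde T_n(\eta_p)]\to Q[\tilde T_n(\eta_{p_0})]$, and multiplying by the continuous factor $s_p^\alpha$ completes the proof on $(0,1)$. The main obstacle is the a.s.\ convergence step, since a priori the optimizer can live anywhere in $\Z^d$; the range bound obtained from the monotonic upper bound $\tilde T_n(\eta_p)\le\tilde T_n(\eta_{p_0+\delta})$ is precisely what localizes the problem to a finite random set where $\eta_p$ stabilizes as $p\to p_0$.
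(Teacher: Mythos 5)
Your proof is correct (modulo the usual level of detail) but proceeds by a genuinely different route than the paper.  The paper's proof is short and topological: it observes that $T_n(\cdot)$ is a continuous functional of the point configuration in the vague topology (since only points in a compact set matter, and one can choose the compactum so its boundary is $\omega$-free), invokes weak convergence of the point processes $\omega_p\Rightarrow\omega_{p'}$ together with the Skorohod representation theorem to get $Q$-a.s.\ vague convergence, and then upgrades $T_n(\omega_p)\to T_n(\omega_{p'})$ a.s.\ to $L^1$ convergence via the uniform-in-$p$ tail bound of Lemma~\ref{greedy}.  This treats every $p'\in[0,1]$, including the Poisson endpoint $p'=1$, in one stroke.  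Your argument instead factors out the deterministic scale $s_p^\alpha$, couples the Bernoulli fields monotonically through uniforms, localizes all near-optimal paths to a finite random box using the monotone upper bound $\tilde T_n(\eta_{p_0+\delta})$, and then exploits the a.s.\ avoidance of $p_0$ by the finitely many relevant uniforms to get \emph{exact local constancy} of $p\mapsto\tilde T_n(\eta_p)$, with dominated convergence supplying the passage to expectations.  This is more constructive and bypasses all point-process topology on $(0,1)$, and the $O(p)$ greedy estimate cleanly handles $p=0$; the trade-off is that the scaling-plus-lattice decomposition degenerates at $p=1$ (where $s_p^\alpha\to 0$ while $\tilde T_n\to\infty$), so there you fall back on the Poisson weak convergence from~\cite{CFNY}, which is in effect the same continuous-mapping argument the paper applies at all $p$.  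Both proofs are valid; yours buys a more hands-on picture on the open interval, while the paper's is uniform and shorter.

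One small point worth flagging if you write this up carefully: you should choose the random $\varepsilon$ with $\varepsilon\le\delta$ so that the range bound $R^\ast=n\,\tilde T_n(\eta_{p_0+\delta})^{1/\alpha}$ indeed controls every optimizer with $|p-p_0|<\varepsilon$, and you should note that a minimizer exists (the set of paths with $T_n(\gamma)\le\tilde T_n(\eta_{p_0+\delta})$ is finite), both of which are implicit in your sketch.
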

\begin{proof}
Note that $T_n(\cdot)$ is continuous on the set of 
locally finite point configurations with respect to the 
vague topology. 
Indeed for any $\omega$, the 
definition of passage time tells us that only points inside a 
compact set matter. By local finiteness we can choose the compact 
set in such a way that its boundary 
contains no points of $\omega$. Now if $\omega_N\to\omega$ vaguely
as $N\to\infty$, then the points of $\omega_N$ 
inside the compact set converge to those of $\omega$ in the 
Hausdorff metric and then it easily follows that 
$T_n(\omega_N)\to T_n(\omega)$. 
By using the Skorohod representation theorem, we may assume that 
$Q$-almost surely, $\omega_p\to \omega_p'$ vaguely as $p\uparrow p'$. 
Then by continuity, $T_n(\omega_p)\to T_n(\omega_p')$
as $p\uparrow p'$, $Q$-almost surely. From this, $T_n(\omega_p)$ is continuous in $p\in[0,1]$. Since the uniform integrability of 
$\{T_n(\omega_p)\}_{p\in (0,1]}$ follows from Lemma~\ref{greedy}, $L^1(Q)$ convergence follows. 
\end{proof}
The following lemma can be proved by a similar way to Lemma \ref{finite-vol}.
\begin{lem}\label{iii}
  $Q[\log{Z_n}]$ is jointly continuous on $[0,1)\times[-\infty,\infty)$.
\end{lem}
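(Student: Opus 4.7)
The plan is to adapt the strategy of Lemma~\ref{finite-vol}: for fixed $n$, verify joint continuity of the map $(\omega, \beta) \mapsto \log Z_n^{\omega, \beta}$, then invoke the Skorohod representation theorem together with a uniform integrability argument.

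For continuity in $\beta$, I write $Z_n^{\omega, \beta} = \sum_X P(X) \exp\{\beta H_n^\eta(X)\}$ and observe that this series is absolutely summable with $Q$-a.s.\ summable envelope (since $e^{\beta H_n^\eta(X)} \le e^{|\beta| n}$), so dominated convergence gives continuity in $\beta \in (-\infty, \infty)$. At $\beta = -\infty$, monotone convergence and the noted fact that $Z_n^{\omega, -\infty} = P(H_n^\eta = 0) > 0$ for $Q$-a.e.\ $\omega$ handle the limit. For continuity in $\omega$, as in Lemma~\ref{finite-vol} I fix a compact $K \subset \N \times \R^d$ whose boundary is disjoint from the (countable) support of $\omega$; then $\omega_N \to \omega$ vaguely implies $\omega_N \cap K \to \omega \cap K$ in the Hausdorff metric, so $H_n^{\eta_N}(X) \to H_n^\eta(X)$ for any path $X$ contained in $K$, while paths leaving $K$ contribute exponentially little thanks to the rapid decay of $f$ in \eqref{f(k)}. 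Another dominated convergence step then gives $\log Z_n^{\omega_N, \beta} \to \log Z_n^{\omega, \beta}$.

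Next, the Skorohod representation theorem allows us to realize the family $\{\omega_p\}$ on a single probability space so that $\omega_p \to \omega_{p'}$ vaguely $Q$-a.s.\ as $p \to p'$; combined with the joint continuity just established, this yields $\log Z_n^{\omega_p, \beta} \to \log Z_n^{\omega_{p'}, \beta'}$ $Q$-a.s.\ as $(p, \beta) \to (p', \beta')$ in $[0,1) \times [-\infty, \infty)$. To upgrade this to $L^1(Q)$-convergence (which is what is needed for continuity of $Q[\log Z_n]$), I need uniform integrability of $\{\log Z_n^{\omega_p,\beta}\}$ on each compact subset of $[0, 1) \times [-\infty, \infty)$. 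For $\beta \le 0$, the second moment bound in \eqref{tail-and-2nd} gives $Q[|\log Z_n^{\omega_p,\beta}|^2] \le C n^2$ uniformly in $p \in [0,q)$ and $\beta \in [-\infty, 0]$, via the tail estimate of Lemma~\ref{greedy}. For $\beta > 0$, I invoke the self-duality~\eqref{selfaj} to reduce to the case of non-positive inverse temperature applied to $1 - \eta$ with parameter $1 - p \in (1-q, 1]$, for which the same bound gives the needed uniform integrability.

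The step I expect to be most delicate is the continuity in $\omega$ at $\beta = -\infty$: since $Z_n^{\omega, -\infty}$ can be extremely small, a small perturbation of $\omega$ could in principle destroy the (rare) paths achieving $H_n^\eta = 0$ and cause $\log Z_n^{\omega, -\infty}$ to drop discontinuously. However, for any single path $X$, the event $\{H_n^\eta(X) = 0\}$ depends only on $\eta$ at the $n$ fixed sites $\{(j, X_j)\}_{j=1}^n$, which lie in a fixed compact, and since $P$ has full support on $\Z^d$ (via the unbounded jumps), vague convergence of $\omega_N$ stabilizes each term of the series $\sum_X P(X) \mathbf{1}_{H_n^{\eta_N}(X) = 0}$. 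Dominated convergence, justified by summability of $P(X)$, then closes the argument.
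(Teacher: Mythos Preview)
Your approach is essentially the paper's own---indeed the paper merely says the proof is ``similar to Lemma~\ref{finite-vol}'', and you have supplied considerably more detail than that. One small slip in the uniform-integrability step: for $\beta>0$, the self-duality~\eqref{selfaj} sends the Bernoulli parameter to $1-p\in(1-q,1]$, which can be arbitrarily close to $1$ when $p$ is near $0$, and the bound~\eqref{tail-and-2nd} is \emph{not} uniform there (its derivation carries a factor $s_{\cdot}^{-\alpha}$ that blows up as the parameter tends to $1$). The fix is simpler than the detour you take: for finite $\beta>0$ one has the deterministic two-sided bound $0\le \log Z_n^{\eta,\beta}\le \beta n$, which gives uniform integrability on $\beta$-bounded sets immediately.
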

From Lemmas \ref{finite-vol} and \ref{iii}, we obtain Corollaries \ref{conti1} and \ref{conccc;FE}.
\section*{Acknowledgements}
We gratefully acknowledge useful conversations with Ryoki Fukushima.

\end{document}